\newenvironment{pf*}[1]{\proof[#1]}{\endproof}
\newtheorem{Theorem}[equation]{Theorem}
\newtheorem{Corollary}[equation]{Corollary}
\newtheorem{Proposition}[equation]{Proposition}
\theoremstyle{definition}
\newtheorem{Definition}[equation]{Definition}
\newtheorem{Example}[equation]{Example}
\theoremstyle{remark}
\newtheorem{Remark}[equation]{Remark}
\numberwithin{equation}{section}
\numberwithin{figure}{section}
\newcommand{\PP}{{\mathbb P}}
\newcommand{\C}{{\mathbb C}}
\newcommand{\R}{{\mathbb R}}
\newcommand{\mc}[1]{\mathcal{#1}}
\newcommand{\ms}[1]{\mathscr{#1}}
\newcommand{\mf}[1]{\mathfrak{#1}}
\newcommand{\beq}{\begin{equation}}
\newcommand{\eeq}{\end{equation}}
\newcommand{\mt}[1]{\text{#1}}
\def\red#1{{\color{red}{#1}}}
\def\ds#1{\displaystyle{#1}}
\begin{document}

\title{$\mt{SL}_2$-regular Subvarieties of Complete Quadrics}
\author{Mahir Bilen Can\\ Michael Joyce}
\maketitle

\begin{abstract}
We determine $\mt{SL}_n-$stable, $\mt{SL}_2-$regular subvarieties of the variety of complete quadrics.
We extend the results of Aky{\i}ld{\i}z and Carrell on Kostant-Macdonald identity by computing the 
Poincar{\'e} polynomials of these regular subvarieties.  
\end{abstract}

\section{Introduction}

The study of the variety $\ms{X}:=\ms{X}_n$ of $(n-2)$-dimensional complete quadrics, a completion of the variety of 
smooth quadric hypersurfaces in $\PP^{n-1}(\C)$, dates back to the nineteenth century, where it was used to answer 
fundamental questions in enumerative geometry.  Complete quadrics received renewed attention in the second 
half of the twentieth century for two primary reasons: (1) the toolkit of modern algebraic geometry made it possible to 
develop Schubert calculus rigorously, thereby addressing Hilbert's $15^{\text{th}}$ problem \cite{KL72, Laksov87}; 
and (2) the interpretation of $\ms{X}$ by De Concini and Procesi as an example of a wonderful embedding \cite{DP83}.

Identifying quadrics with the symmetric matrices defining them (up to scaling), the change of variables action of $\mt{SL}_n:=\mt{SL}_n(\C)$ 
on smooth quadrics corresponds to the action on symmetric matrices given by $g \cdot A = \theta(g) A g^{-1}$, for $g \in \mt{SL}_n$, 
$A$ a non-degenerate symmetric matrix, and $\theta$ the involution $\theta(g) = (g^\top)^{-1}$. Modulo the center of $\mt{SL}_n$, 
the variety of smooth quadric hypersurfaces can be identified as $\mt{SL}_n / \mt{SO}_n$ since $\mt{SO}_n \subset \mt{SL}_n$ 
is the stabilizer of the smooth quadric defined by the identity matrix.

Any semi-simple, simply connected complex algebraic group $G$ equipped with an involution $\sigma$ has a 
canonical wonderful embedding $X$. Letting $H$ denote the normalizer of $G^\sigma$, $X$ is a smooth projective 
$G-$variety containing an open $G-$orbit isomorphic to $G/H$ and whose boundary $X - (G/H)$ is a union of smooth 
$G-$stable divisors with smooth transversal intersections. Boundary divisors are canonically indexed by the elements 
of a certain subset $\varDelta$ of a root system associated to $(G,\sigma)$. Each $G-$orbit in $X$ corresponds to a subset 
$I\subseteq \varDelta$.  The Zariski closure of the orbit is smooth and is equal to the transverse intersection of the 
boundary divisors corresponding to the elements of $I$.

The  wonderful embedding of the symmetric pair $(\mt{SL}_n,\theta)$ above is $\ms{X}$, 
where $\sigma(A) = (A^{\top})^{-1}$. In this case, $\varDelta$ is the set of simple roots associated to $\mt{SL}_n$ 
relative to its maximal torus of diagonal matrices contained in the Borel subgroup $B\subseteq \mt{SL}_n$
of upper triangular matrices, and is canonically identified with the set $[n-1] = \{1, 2, \dots n-1\}$.

The study of cohomology theories of wonderful embeddings, initiated in \cite{DP83}, has been carried out through several 
different approaches. Poincar\'e polynomials have been computed in \cite{DS85, Strickland86, Renner03},
while the structure of the (equivariant) cohomology rings have been described in \cite{DGMP88, LP90, BDP90, Strickland06, BJ08}.

We study the cohomology of $\mt{SL}_n$-stable subvarieties of $\ms{X}$ that are $\mt{SL}_2$-regular.  An $\mt{SL}_2$-regular variety is one which admits an action of $\mt{SL}_2$ such that any one-dimensional unipotent subgroup of $\mt{SL}_2$ fixes a single point.
Aky{\i}ld{\i}z and Carrell developed a remarkable approach for studying the cohomology algebra $H^*(X;\C)$ of such varieties (\cite{ACLS83, AC87, AC89}).
Their method, when applied to flag varieties, has important representation theoretic consequences.

Let us briefly describe the contents of this paper.  Section \ref{S:preliminaries} sets some notation and recalls 
some basic facts about $\mt{SL}_2$-regular varieties, wonderful embeddings, and complete quadrics.  
In Section \ref{S:regular=special}, we precisely identify which $\mt{SL}_n$-stable subvarieties of $\ms{X}$ are $\mt{SL}_2$-regular.  
When combined with an earlier result of Strickland \cite{Strickland86}, our result takes an especially nice form: 
an $\mt{SL}_n$-stable subvariety of $\ms{X}$ is $\mt{SL}_2$-regular if and only if the dense orbit of the subvariety 
contains a fixed point of the maximal torus of $\mt{SL}_n$.

In Section \ref{S:KM}, we apply the machinery developed by Aky{\i}ld{\i}z and Carrell to compute the Poincar\'e polynomial
$$
P_{X}(t) := \sum_{i=0}^{2 \dim X}\dim_{\C} H^{i}( X;\C) t^i
$$
of a $\mt{SL}_2$-regular, $\mt{SL}_n$-stable subvariety $X$ of $\ms{X}$. 
If $I \subset [n-1]$ is the corresponding set of simple roots, then
\begin{equation*}
P_{X}(t) = \left( \frac{1-t^6}{1-t^4}\right)^{|I|} \prod_{k=1}^n \frac{1 - t^{2k}}{1 - t^2}.
\end{equation*}
This factorization of the Poincar\'e polynomial should be viewed as a generalization of the famous identity of 
Kostant and Macdonald (\cite{Kostant59, Macdonald72})
\begin{equation*}
\sum_{\pi \in S_n} q^{\ell(\pi)} = \prod_{k=1}^n \frac{1 - q^k}{1 - q},
\end{equation*}
recognizing the left-hand side as the Poincar\'e polynomial of the complete flag variety, evaluated at $q = t^2$.

\noindent \textbf{Acknowledgement.} The first author is partially supported by the Louisiana Board of Regents enhancement grant.

\section{Preliminaries}\label{S:preliminaries}
	
\subsection{Notation and Conventions}\label{S:notation}
All varieties are defined over $\C$ and all algebraic groups are complex algebraic groups.
Throughout, $n$ is a fixed integer, and $\ms{X} := \ms{X}_n$ denotes the $\mt{SL}_n$-variety of $(n-2)$-dimensional 
complete quadrics, which is reviewed in Section \ref{S:complete quadrics}.
The set $\{1, 2, \dots, m\}$ is denoted $[m]$ and if $I \subset [m]$, then its complement is denoted $I^\text{c}$.
If $I$ and $K$ are sets, then $I-K$ denotes the set complement $\{a\in I:\ a\notin K\}$.
The transpose of a matrix $A$ is denoted $A^\top$.

We denote by $B' \subset \mt{SL}_2$ the subgroup of upper triangular matrices, with its usual decomposition $B' = T' U'$ into 
a semidirect product of a maximal torus $T'$ consisting of the diagonal matrices and the unipotent radical $U'$ of $B'$. 
Let $\mathfrak{b}', \mathfrak{t}', \mathfrak{u}'$ denote their Lie algebras.

Finally, the symmetric group of permutations of $[n]$ is denoted by $S_n$, and for $w \in S_n$, 
$\ell(w)$ denotes $\ell(w)= |\{(i,j):\ 1\leq i <j \leq n,\ w(i) > w(j)\}|$.

\subsection{$\mt{SL}_2$-regular Varieties}\label{S:regular}

Let $X$ be a smooth projective variety over $\C$ on which an algebraic torus $T$ acts with finitely many fixed points.
Let $T'$ be a one-parameter subgroup with $X^{T'} = X^T$. For $p\in X^{T'}$ define the sets 
$C_p^+ = \{y \in X:\ \ds{\lim_{t \to 0} t \cdot y = p,}\ t \in T' \}$ and 
$C_p^- = \{y \in X:\ \ds{\lim_{t \to \infty} t \cdot y = p,}\ t \in T' \}$, 
called the {\em plus cell} and {\em minus cell} of $p$, respectively.

\begin{Theorem}[\cite{BB73}]\label{T:BB}
Let $X,T$ and $T'$ be as above. Then
\begin{enumerate}
\item $C_p^+$ and $C_p^-$ are locally closed subvarieties isomorphic to affine space;
\item if $T_p X$ is the tangent space of $X$ at $p$, then $C_p^+$ (resp., $C_p^-$) is $T'$-equivariantly 
isomorphic to the subspace $T_p^+ X$ (resp., $T_p^- X$) of $T_p X$ spanned by the positive (resp., negative) 
weight spaces of the action of $T'$ on $T_p X$.
\end{enumerate}
\end{Theorem}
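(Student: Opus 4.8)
The plan is to reduce everything to the action of the single one-parameter subgroup $T' \cong \mathbb{G}_m$, for which $X^{T'} = X^T$ is a finite set of isolated fixed points, and then to pass from a local, linearized description of the cells near each fixed point to a global one. First I would observe that, since $X$ is projective and hence complete, the valuative criterion of properness guarantees that for every $y \in X$ the orbit map $t \mapsto t \cdot y$ extends across $t = 0$ and $t = \infty$; thus $\lim_{t \to 0} t \cdot y$ and $\lim_{t \to \infty} t \cdot y$ both exist and are fixed points, so the plus cells $\{C_p^+\}$ and the minus cells $\{C_p^-\}$ each partition $X$. It therefore suffices to analyze a single cell, and by replacing $t$ with $t^{-1}$ the minus cell statement follows from the plus cell statement.

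The second step is local linearization at a fixed point $p$. The maximal ideal $\mathfrak{m}_p$ is $T'$-stable, so the cotangent space $\mathfrak{m}_p / \mathfrak{m}_p^2 = T_p^\ast X$ carries a linear $T'$-action; lifting a weight basis to $T'$-semi-invariant regular functions on a $T'$-stable affine neighborhood $U$ of $p$ produces a $T'$-equivariant morphism $\phi : U \to T_p X$ that is \'etale at $p$. Because $p$ is an isolated fixed point, the zero-weight subspace of $T_p X$ vanishes, giving a $T'$-stable decomposition $T_p X = T_p^+ X \oplus T_p^- X$. On the linear model the limit computation is explicit: $t$ scales the weight-$w$ component by $t^w$, so $\lim_{t \to 0} t \cdot v = 0$ exactly when $v$ has no negative-weight part, i.e. $v \in T_p^+ X$. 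Transporting this through $\phi$ identifies $C_p^+$ near $p$ with $\phi^{-1}(T_p^+ X)$, which is smooth of dimension $\dim T_p^+ X$; this already yields statement (2) in a formal or analytic neighborhood of $p$.

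The third and decisive step is globalization. I would first verify that $C_p^+$ is a locally closed subvariety, which follows from the local analysis at the fixed point together with $T'$-invariance of $C_p^+$. Then I would use that $T'$ acts on the smooth variety $C_p^+$ with $p$ as its unique, attracting fixed point; a contracting $\mathbb{G}_m$-action of this kind forces $C_p^+$ to be affine, say $C_p^+ = \operatorname{Spec} R$, with $R = \bigoplus_{d \geq 0} R_d$ non-negatively graded and $R_0 = \mathbb{C}$. Regularity of $R$ at the graded maximal ideal $R_+ = \bigoplus_{d > 0} R_d$, combined with the grading, forces $R$ to be a polynomial ring on a weight basis of $T_p^\ast X$; equivalently, the $T'$-equivariant map $C_p^+ \to T_p^+ X$ induced by $\phi$ is a global isomorphism. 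This simultaneously gives statement (1), that $C_p^+ \cong \mathbb{A}^{\dim T_p^+ X}$, and the equivariant refinement in statement (2).

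The main obstacle is precisely this last globalization. The linearization of the second step is genuinely local — a priori only formal or analytic — and the content of the theorem is that no global twisting can occur, so that the cell is globally $\mathbb{A}^{\dim T_p^+ X}$ rather than merely a smooth variety carrying the correct tangent data at one point. Establishing affineness of the attracting set and running the graded-ring argument (equivalently, showing that the limit map exhibits $C_p^+$ as a $T'$-equivariant vector bundle over the point $p$) is the technical heart of the proof; everything else is either a properness argument or a direct computation on a linear representation.
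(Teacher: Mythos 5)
The paper does not prove this statement: Theorem~\ref{T:BB} is quoted from Bialynicki-Birula's 1973 paper as background, and no argument for it appears anywhere in the text. So there is no in-paper proof to compare against; I can only assess your sketch on its own terms. As a roadmap it follows the standard line of attack --- existence of both limits by completeness of $X$, so the plus and minus cells each partition $X$ and the two cases are exchanged by $t\mapsto t^{-1}$; linearization at an isolated fixed point $p$ (where isolatedness correctly forces the zero-weight space of $T_pX$ to vanish); explicit computation of the attracting set in the linear model; and a globalization step. That outline is sound, and you are right that the globalization is where the theorem actually lives.

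However, two steps are asserted rather than established, and they are precisely the load-bearing ones. First, ``transporting this through $\phi$'' is not legitimate as stated: a morphism $\phi\colon U\to T_pX$ that is merely \'etale at $p$ does not identify attracting sets, because whether $\lim_{t\to 0}t\cdot y=p$ is a statement about the entire forward orbit of $y$, not about a Zariski (or even \'etale) neighborhood of $p$; moreover no $T'$-stable Zariski-open neighborhood of $0$ in $T_pX$ is small (any such neighborhood contains all of $T_p^+X\cup T_p^-X$), so one cannot shrink $U$ to make $\phi$ an equivariant open immersion. Every rigorous proof has to work around this, e.g.\ via a formal/henselian comparison plus algebraization, or via Bialynicki-Birula's original argument. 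Second, the claim that a smooth variety with a contracting $\mathbb{G}_m$-action and unique fixed point is affine is itself a theorem of essentially the same depth as the one being proved (due to Bialynicki-Birula and Konarski in various forms); invoking it as a known fact makes the argument circular unless you supply an independent proof. You flag this yourself as ``the technical heart,'' which is the correct diagnosis --- but it means the proposal is an accurate plan rather than a proof. The graded-ring endgame (a non-negatively graded regular $\mathbb{C}$-algebra with $R_0=\mathbb{C}$ is a polynomial ring) is fine once affineness and the equivariant identification of the cotangent space are in hand.
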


As a consequence of Theorem \ref{T:BB}, there exists a filtration
$$
X^{T'}  = V_0 \subset V_1 \subset \cdots \subset V_n = X,\qquad n = \dim X,
$$
of closed subsets such that for each $i=1,\dots,n$, $V_i - V_{i-1}$ is the disjoint union of
the plus (resp.,  minus) cells in $X$ of (complex) dimension $i$. It follows that the odd-dimensional integral 
cohomology groups of $X$ vanish, the even-dimensional integral cohomology groups of $X$ are free, 
and the Poincar\'e polynomial $P_X(t) := \sum_{i=0}^{2n} \dim_{\C} H^{i}(X; \C) t^i$ of $X$ is given by
$$
P_X(t) = \sum_{p \in X^{T'}} t^{2 \dim C_p^+} = \sum_{p \in X^{T'}} t^{2 \dim C_p^-}.
$$
Because the odd-dimensional cohomology vanishes, we will prefer to study the $q$-Poincar\'e polynomial, $P_X(q) = P_X(t^2)$.

Now suppose that $X$ has an action of $\mt{SL}_2$.  The action of $U'$ gives rise to a vector field $V$ on $X$.  
Note that $p \in X$ is fixed by $U'$ if and only if $V(p) \in T_p X$ is zero.  The $\mt{SL}_2$-variety $X$ is said to 
be {\em $\mt{SL}_2$-regular} if there is a unique $U'$-fixed point on $X$.  An $\mt{SL}_2$-regular variety has only finitely many 
$T'$-fixed points \cite{AC89}.

A smooth projective $G$-variety $X$ is {\em $\mt{SL}_2$-regular} if there exists an injective homomorphism 
$\phi: \mt{SL}_2 \hookrightarrow G$ such that the induced action makes $X$ into an $\mt{SL}_2$-regular $\mt{SL}_2$-variety.  
Recall that the Jacobson-Morozov Theorem \cite[Section 5.3]{Carter85} implies that when $G$ is simply-connected 
(the only case we consider) such $\phi$ are determined by specifying $h \in \mathfrak{t'}$ and $e \in \mathfrak{u'}$ satisfying 
$[h,e] = 2e$.  As an abuse of notation, we will often identify $B', T', U'$ (resp., $\mathfrak{b}', \mathfrak{t}', \mathfrak{u}', h, e$) 
with their images under $\phi$ (resp., $d \phi$).

Let $p$ be the unique $U'$-fixed point of the $\mt{SL}_2$-regular variety $X$.  The minus cell $C_p^-$ is open in $X$ \cite{AC87}, 
and hence, the weights of $T'$ on $T_p X$ are all negative.  Let $x_1, \dots, x_n$ be a $T'$-equivariant basis for the cotangent 
space $T_p^* X$ of $X$.  Then the coordinate ring $\mathbb{C}[C_p^-] = \mathbb{C}[x_1, \dots, x_n]$ is a graded algebra with 
$\deg x_i > 0$.  Viewing the vector field $V$ associated to the $U'$ action as a derivation of $\mathbb{C}[x_1, \dots, x_n]$, 
$V(x_i)$ is homogeneous of degree $\deg x_i + 2$ and $V(x_1), V(x_2), \dots V(x_n)$ is a regular sequence in 
$\mathbb{C}[x_1, \dots, x_n]$ \cite{AC89}.

\begin{Theorem}\cite[Proposition 1.1]{AC87}\label{T:AC}
Let $Z$ be the zero scheme of the vector field $V$, supported at the point $p \in X$, and let 
$I(Z) = (V(x_1), \dots, V(x_n)) \subset \C[x_1, \dots, x_n]$ be the ideal of $Z$, graded as above. 
Then there exists a degree-doubling isomorphism of graded algebras $\C[C_p^-] / I(Z) \cong H^*(X; \C)$.

Consequently, the $q$-Poincar\'e polynomial of $X$ is given by
$$
P_X(q) = \prod_{i = 1}^n \frac{1 - q^{\deg x_i + 1}}{1 - q^{\deg x_i}}.
$$
\end{Theorem}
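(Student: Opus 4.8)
The statement has two halves --- the graded algebra isomorphism $\C[C_p^-]/I(Z) \cong H^*(X;\C)$ and the product formula for $P_X(q)$ --- and the second follows from the first by a Hilbert series computation. My plan is to derive the isomorphism from the theory of contraction by a holomorphic vector field (the circle of ideas underlying \cite{ACLS83, AC87, AC89}), combined with the geometric fact, supplied by Theorem \ref{T:BB}, that an $\mt{SL}_2$-regular $X$ is paved by affine cells; I would then read off $P_X(q)$ from the complete-intersection structure forced by the regular-sequence hypothesis. Throughout, the $T'$-action provides compatible gradings on every object in sight, and keeping track of them is what makes the filtered statements into graded ones.

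For the isomorphism I would begin with the contraction operator $\iota_V \colon \Omega_X^p \to \Omega_X^{p-1}$ attached to $V$. Since $\iota_V^2 = 0$, the sheaves $\Omega_X^\bullet$ form a complex, exact away from the zero scheme $Z$; because $X$ is $\mt{SL}_2$-regular, $Z$ is supported at the single point $p$, hence has codimension $n = \dim X$, so $V$ is a regular section of $T_X$ and $(\Omega_X^\bullet, \iota_V)$ is, up to shift, a Koszul resolution of $\mathcal{O}_Z$. The resulting hypercohomology spectral sequence has $E_1$-page $\bigoplus_{p,q} H^q(X, \Omega_X^p)$ and abuts to $H^*(Z, \mathcal{O}_Z)$, which is concentrated in degree $0$ since $Z$ is a fat point. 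The key input is that the Bialynicki-Birula paving of Theorem \ref{T:BB} forces the cohomology of $X$ to vanish in odd degrees and to be of Hodge--Tate type, $H^q(X, \Omega_X^p) = 0$ for $p \neq q$ and $H^{2p}(X;\C) \cong H^p(X, \Omega_X^p)$. This diagonal concentration degenerates the spectral sequence and yields a filtration $F$ on $H^0(Z,\mathcal{O}_Z)$ with $\mathrm{gr}_F H^0(Z,\mathcal{O}_Z) \cong H^*(X;\C)$ as graded algebras, doubling degrees. Finally, since $C_p^-$ is open and contains $Z$, one has $H^0(Z, \mathcal{O}_Z) = \C[C_p^-]/I(Z)$; the $T'$-action makes this a graded ring, and the Carrell--Lieberman filtration $F$ is precisely the one induced by that grading, so splitting it upgrades the filtered isomorphism to the asserted graded isomorphism $\C[C_p^-]/I(Z) \cong H^*(X;\C)$.

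For the Poincar\'e polynomial I would use that $V(x_1), \dots, V(x_n)$ is a regular sequence, so $A := \C[C_p^-]/I(Z)$ is a graded complete intersection and its Koszul complex computes the Hilbert series
\[
\sum_{d \geq 0} (\dim_\C A_d)\, s^d = \frac{\prod_{i=1}^n \bigl(1 - s^{\deg V(x_i)}\bigr)}{\prod_{i=1}^n \bigl(1 - s^{\deg x_i}\bigr)}, \qquad \deg V(x_i) = \deg x_i + 2 .
\]
The degree-doubling isomorphism identifies the weight-$w$ piece of $A$ with $H^{w}(X;\C)$, so this Hilbert series is $P_X(t)$; passing to the variable $q = t^2$ halves all the cohomological exponents and thereby converts the weight shift of $2$ produced by $V$ into a shift of $1$. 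Writing $\deg x_i$ for the resulting $q$-degree of $x_i$, this gives
\[
P_X(q) = \prod_{i=1}^n \frac{1 - q^{\deg x_i + 1}}{1 - q^{\deg x_i}} .
\]

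The main obstacle is the first part: setting up the hypercohomology spectral sequence and establishing its degeneration, and --- more delicately --- verifying that the Carrell--Lieberman filtration on $H^0(Z,\mathcal{O}_Z)$ coincides with the $T'$-weight grading, so that the filtered isomorphism genuinely splits into a graded (and multiplicative) one. Once this weight-and-degree bookkeeping is pinned down, the complete-intersection computation and the passage to $P_X(q)$ are routine.
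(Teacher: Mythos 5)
The paper does not prove this statement---it is quoted verbatim from \cite{AC87}---so there is no internal proof to compare against; your proposal correctly reconstructs the standard Carrell--Lieberman/Aky{\i}ld{\i}z--Carrell argument (the Koszul resolution of $\mathcal{O}_Z$ by $(\Omega_X^\bullet, \iota_V)$, degeneration of the hypercohomology spectral sequence via the Hodge--Tate property supplied by the Bia{\l}ynicki-Birula paving of Theorem \ref{T:BB}, splitting of the resulting filtration by the $T'$-weight grading, and the complete-intersection Hilbert series), which is essentially the proof in the cited source. The only point needing care is the degree normalization: the paper grades $x_i$ by its full $T'$-weight (so $V$ shifts degree by $2$) yet states the product formula in the half-weight convention (shift by $1$), and you resolve this correctly by passing to $q = t^2$.
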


\subsection{Wonderful Embeddings}\label{S:wonderful embeddings}
		
We briefly review the theory of wonderful embeddings, referring the reader to \cite{DP83} and 
\cite{Pezzini10} for more details.

\begin{Definition}\label{D:wonderful embedding}
Let $X$ be a smooth, complete $G$-variety containing a dense open homogeneous subvariety $X_0$. 
Then $X$ is a {\em wonderful embedding} of $X_0$ if
\begin{enumerate}
\item $X - X_0$ is the union of finitely many $G$-stable smooth codimension one subvarieties $X_i$ for $i = 1, 2, \dots, r$;
\item for any $I \subset [r]$, the intersection $X^I := \cap_{i \notin I} X_i$ is smooth and transverse;
\item every irreducible $G$-stable subvariety has the form $X^I$ for some $I \subset [r]$.
\end{enumerate}
If a wonderful embedding of $X_0$ exists, it is unique up to $G$-equivariant isomorphism.
\end{Definition}

The $G$-orbits of $X$ are also parameterized by the sets $I \subset [r]$. We denote by $\ms{O}^I$ the unique dense 
$G$-orbit in $X^I$.  There is a fundamental decomposition
\begin{equation}\label{E:orbit union}
X^I = \bigsqcup_{K \subset I} \ms{O}^K.
\end{equation}
Note that $X$ contains a unique closed orbit $Z$, corresponding to $I = \emptyset$.

\begin{Remark}\label{R:sphericalroots}
Fix a Borel subgroup $B \subset G$ and let $B^-$ denote the opposite Borel subgroup of $B$.  
Fix a maximal torus $T \subset B$ and let $p \in Z$ be the unique $B^-$-fixed point.  
The {\em spherical roots} of $X$ are the $T$-weights of $T_p X / T_p Z$ and the set $[r]$ in 
Definition \ref{D:wonderful embedding} can be intrinsically identified with the set of spherical roots of $X$.
\end{Remark}

\subsection{Complete Quadrics}\label{S:complete quadrics}

There is a vast literature on the variety $\ms{X}$ of complete quadrics.  See \cite{Laksov87} for a survey, 
as well as \cite{DP83} and \cite{DGMP88} for recent work on the cohomology ring of $\ms{X}$.  We briefly recall the relevant definitions.

Let $X_0$ denote the open set of the projectivization of $\text{Sym}_n$, the space of symmetric $n$-by-$n$ matrices, 
consisting of matrices with non-zero determinant.  Elements of $X_0$ should be interpreted as (the defining equations of) 
smooth quadric hypersurfaces in $\PP^{n-1}$.  The group $\mt{SL}_n$ acts on $X_0$ by change of variables defining 
the quadric hypersurfaces, which translates to the action
\begin{equation}\label{E:action}
g \cdot A = (g^\top)^{-1} A g^{-1}
\end{equation}
on $\text{Sym}_n$.  $X_0$ is a homogeneous space under this $\mt{SL}_n$ action and the stabilizer of the quadric 
$x_1^2 + x_2^2 + \dots x_n^2 = 0$ (equivalently, the identity matrix) is $\mt{SO}_n$.

The classical definition of $\ms{X}$ (see \cite{Schubert, Semple48, Tyrrell56}) is as the closure of the image of the map
$$
[A] \mapsto ([A], [\Lambda^2(A)], \dots, [\Lambda^{n-1}(A)]) \in \prod_{i=1}^{n-1} \PP(\Lambda^i(\text{Sym}_n)).
$$
Renewed interest in the variety of complete quadrics can be attributed in large part to the following theorem, 
which gives two alternative descriptions of $\ms{X}$.

\begin{Theorem}\label{T:V DP}
\begin{enumerate}
\item \cite{Vainsencher82} $\ms{X}$ can be obtained by the following sequence of blow-ups:  in the naive compactification 
$\PP^{n-1}$ of $X_0$, first blow up the locus of rank $1$ quadrics; then blow up the strict transform of the rank $2$ quadrics; 
$\dots$; then blow up the strict transform of the rank $n-1$ quadrics.\label{T:vainsencher quadrics}
\item \cite{DP83} $\ms{X}$ is the wonderful embedding of $X_0$ and the spherical roots of $\ms{X}$ are the simple positive roots of 
the $\rm{A_n}$ root system. \label{T:DP quadrics}
\end{enumerate}
\end{Theorem}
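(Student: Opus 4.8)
The plan is to establish the two descriptions separately, organizing everything around the rank stratification of $\PP(\text{Sym}_n)$, and then to feed the blow-up model of the first (Vainsencher) assertion directly into the verification of the wonderful-embedding axioms of the second (De Concini--Procesi) assertion. The key observation is that for a symmetric matrix $A$ the compound power $\Lambda^i(A)$, whose entries are the $i\times i$ minors of $A$, vanishes precisely when $\operatorname{rank}(A) < i$. Hence the rational map
$$
\psi \colon \PP(\text{Sym}_n) \dashrightarrow \prod_{i=1}^{n-1} \PP(\Lambda^i(\text{Sym}_n)), \qquad [A] \mapsto ([A], [\Lambda^2 A], \dots, [\Lambda^{n-1} A]),
$$
has indeterminacy locus governed exactly by the closed subvarieties $R_r := \{[A] : \operatorname{rank}(A) \le r\}$. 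Since $\ms{X}$ is by definition the closure of the image of $\psi$ restricted to $X_0 = \PP(\text{Sym}_n) - R_{n-1}$, the whole problem reduces to producing a smooth projective model on which $\psi$ becomes an everywhere-defined morphism.

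For the first assertion I would resolve the indeterminacy of $\psi$ one exterior power at a time. The component $[A]$ is already a morphism; the component $[\Lambda^2 A]$ is undefined exactly along $R_1$, which is the quadratic Veronese image of $\PP^{n-1}$ (the rank-one symmetric matrices $vv^\top$) and is smooth. By the universal property of blowing up, passing to $\ms{X}^{(1)} := \mathrm{Bl}_{R_1}\,\PP(\text{Sym}_n)$ turns $[\Lambda^2 A]$ into a genuine morphism. The inductive step is to show that the strict transform of $R_{r+1}$ in $\ms{X}^{(r)}$ is smooth, so that blowing it up simultaneously resolves the component $[\Lambda^{r+2} A]$ and preserves ambient smoothness. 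Granting these smoothness statements, after $n-1$ blow-ups $\psi$ lifts to a morphism from a smooth projective variety $\ms{X}^{(n-1)}$; the final step is to check that this lift is a closed immersion whose image is exactly the closure $\ms{X}$ (it is birational onto its image because $\psi$ already embeds $X_0$, so it suffices to verify that it separates the boundary points and their tangent directions).

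For the second assertion I would verify the three axioms of Definition \ref{D:wonderful embedding} by reading them off the blow-up model. First, by \eqref{E:action} the stabilizer of the identity matrix is $\mt{SO}_n$, so $X_0$ is the dense homogeneous orbit, equal to $\mt{SL}_n/\mt{SO}_n$ modulo the center. The boundary $\ms{X} - X_0$ is the union of the strict transforms of the exceptional divisors $X_1, \dots, X_{n-1}$; these are smooth of codimension one and $\mt{SL}_n$-stable because each rank stratum is, yielding axiom (1). The normal-crossing behaviour of successive exceptional divisors in an iterated blow-up of smooth centers gives the smooth transversal intersections of axiom (2), and the fact that an $\mt{SL}_n$-orbit is determined by the rank data of its defining collection identifies the irreducible $\mt{SL}_n$-stable subvarieties with the intersections $X^I$, which is axiom (3); this is compatible with the decomposition \eqref{E:orbit union}. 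Finally, to pin down the spherical roots I would invoke Remark \ref{R:sphericalroots}: the involution $\theta(g) = (g^\top)^{-1}$ acts on the diagonal maximal torus $T \subset \mt{SL}_n$ by inversion, so the symmetric pair $(\mt{SL}_n, \mt{SO}_n)$ is split and its restricted root system is the full root system of $\mt{SL}_n$; computing the $T$-weights on $T_p\ms{X}/T_pZ$ at the $B^-$-fixed point of the closed orbit then returns precisely the simple positive roots, with one for each of the $n-1$ boundary divisors.

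The main obstacle is the smoothness claim underlying the induction in the first assertion: one must show that the strict transform of $R_{r+1}$, taken after $R_1, \dots, R_r$ have been blown up, is a smooth center that meets the accumulated exceptional divisors transversally. This is genuinely delicate because the rank loci $R_r$ are themselves singular in $\PP(\text{Sym}_n)$ (singular exactly along $R_{r-1}$), so smoothness of the center materializes only after the lower strata have been resolved; controlling the local equations of the minors through each blow-up, for instance via a local normal form for a symmetric matrix near a prescribed rank stratum, is where the real work lies. Once smoothness and transversality are secured, both the blow-up description and the verification of the wonderful-embedding axioms follow fairly formally, and the root-system identification reduces to a standard computation with the split involution $\theta$.
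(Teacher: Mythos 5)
This is a background result that the paper does not prove: Theorem \ref{T:V DP} is quoted verbatim from the literature, with part (1) attributed to Vainsencher and part (2) to De Concini--Procesi, so there is no in-paper argument to compare yours against. Judged on its own terms, your outline follows the same architecture as those sources --- resolve the indeterminacy of the map $[A]\mapsto([A],[\Lambda^2A],\dots,[\Lambda^{n-1}A])$ by successively blowing up the rank strata, then read the wonderful-embedding axioms and the spherical roots off the resulting model --- and the individual observations you make along the way (the $i$-th component is undefined exactly on $R_{i-1}$; $R_1$ is the smooth Veronese; the involution is split so the restricted root system is all of $\mathrm{A}_{n-1}$) are correct.

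However, as a proof it has a genuine gap, and it is exactly the one you flag yourself: the inductive claim that the strict transform of $R_{r+1}$ in $\ms{X}^{(r)}$ is smooth and meets the accumulated exceptional divisors transversally, and that blowing up this \emph{reduced} center actually principalizes the ideal of $(r+2)\times(r+2)$ minors. For $r=1$ the universal property of the blow-up applies cleanly because the ideal of $2\times 2$ minors of a symmetric matrix is the (prime) ideal of the Veronese; for higher $r$ the minor ideals are not the reduced ideals of the strict transforms a priori, and one needs the local normal form for a symmetric matrix near a rank stratum (splitting off a nondegenerate block) to see that the strict transform becomes smooth and that the minor ideal becomes invertible after the blow-up. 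This is the entire technical content of Vainsencher's paper, not a routine verification. A second, smaller gap is the final identification: you assert the lifted morphism is a closed immersion with image $\ms{X}$ because it is birational onto its image, but birationality alone does not give a closed immersion; separating boundary points and tangent directions is itself a computation. Since both halves of the real work are deferred, the proposal is a sound plan rather than a proof --- which is defensible here, given that the paper itself treats the statement as citable background.
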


From Theorem \ref{T:V DP}(\ref{T:vainsencher quadrics}), a point $\mathcal{P} \in \ms{X}$ is described by the data of a flag
\begin{equation}\label{E:flag}
\mathcal{F}: V_0 = 0 \subset V_1 \subset \dots \subset V_{s-1} \subset V_s =\C^n
\end{equation}
and a collection $\mathcal{Q} = (Q_1, \dots Q_s)$ of quadrics, where $Q_i$ is a quadric in $\PP(V_i)$ whose singular locus 
is $\PP(V_{i-1})$.  It is clear from Theorem \ref{T:V DP}(\ref{T:DP quadrics}) that $r$ of Definition \ref{D:wonderful embedding} is 
equal to $n-1$, and moreover, $i \in [n-1]$ corresponds to the simple root $\alpha_i := \varepsilon_i - \varepsilon_{i+1}$ in the $\rm{A_n}$ root sytem (see Remark \ref{R:sphericalroots}).

Additionally, for each $K \subset [n-1]$, the map $(\mathcal{F}, \mathcal{Q}) \mapsto \mathcal{F}$ is an 
$\mt{SL}_n$-equivariant projection
\begin{equation}\label{E:pi_K}
\pi_K : \ms{X}^K \rightarrow \mt{SL}_n / P_K,
\end{equation}
where $P_K$ is the standard parabolic subgroup associated with the roots corresponding to $K$. 
The fiber over $\mathcal{F} \in \mt{SL}_n / P_K$ is isomorphic to a product of varieties of complete quadrics of 
smaller dimension.

$\ms{O}^K$ consists of complete quadrics whose flag $\mathcal{F}$ satisfies   
$\{ \dim V_i : i = 1, 2, \dots, s-1 \} = K^{\text{c}}$. $\ms{X}^K$ is a wonderful embedding of $\ms{O}_K$, the variety of 
complete quadrics whose flag satisfies $\{ \dim V_i : i = 1, 2, \dots, s-1 \} \subset K^{\text{c}}$ \cite{DP83}.

In Figure \ref{fig:Cells} we depict the cell decomposition of $\ms{X}_3$, the variety of complete conics in $\PP^2$. 
Each colored disk represents a $B-$orbit and edges stand for the covering relations between closures of $B-$orbits. 
A cell is a union of all $B-$orbits of the same color. 
We include the label $I\subseteq \{1,2\}$, which indicates the $\mt{SL}_3-$orbit containing the given 
$B-$orbit. We use the label $T$ to indicate the presence of a fixed point under the maximal torus $T$ of $\mt{SL}_3$.

\begin{figure}[htp]
\centering 

\begin{tikzpicture}[scale=.56]

\node [shape=circle,draw,fill=blue!10!] at (0,0) (a) {$\emptyset,T$};

\node [shape=circle,draw,fill=purple] at (-6,5) (b1) {$\{\mathbf{1}\},T$};
\node [shape=circle,draw,fill=blue!30!] at (-2,5) (b2) {$\{\mathbf{2}\},T$};
\node [shape=circle,draw,fill=red!20!] at (2,5) (b3) {$\emptyset,T$};
\node [shape=circle,draw,fill=orange!50!] at (6,5) (b4) {$\emptyset,T$};

\node [shape=circle,draw,fill=blue!50!] at (-10,10) (c1) {$\{\mathbf{1}\},T$};
\node [shape=circle,draw,fill=red!20!]  at (-6,10) (c2) {$\{\mathbf{1}\}$};
\node [shape=circle,draw,fill=green!50!] at (-2,10) (c3) {$\{\mathbf{2}\},T$};
\node [shape=circle,draw,fill=orange!50!] at (2,10) (c4) {$\{\mathbf{2}\}$};
\node [shape=circle,draw,fill=red!50!] at (6,10) (c5) {$\emptyset,T$};
\node [shape=circle,draw,fill=yellow!50!] at (10,10) (c6) {$\emptyset,T$};

\node [shape=circle,draw,fill=green!50!] at (-10,15) (d1) {$\{\mathbf{1,2}\}$};
\node  [shape=circle,draw,fill=brown]  at (-6,15) (d2) {$\{\mathbf{1}\},T$};
\node [shape=circle,draw,fill=red!50!] at (-2,15) (d3) {$\{\mathbf{1}\}$};
\node  [shape=circle,draw,fill=green!15]  at (2,15) (d4) {$\{\mathbf{2}\},T$};
\node [shape=circle,draw,fill=yellow!50!] at (6,15) (d5) {$\{\mathbf{2}\}$};
\node  [shape=circle,draw,fill=yellow]  at (10,15) (d6) {$\mathbf{\emptyset},T$};

\node  [shape=circle,draw,fill=brown] at (-6,20) (e1) {$\{\mathbf{1,2}\}$};
\node  [shape=circle,draw,fill=green!15] at (-2,20) (e2) {$\{\mathbf{1,2}\}$};
\node  [shape=circle,draw,fill=yellow]  at (2,20) (e3) {$\{\mathbf{1}\}$};
\node  [shape=circle,draw,fill=yellow]  at (6,20) (e4) {$\{\mathbf{2}\}$};

\node [shape=circle,draw,fill=yellow] at (0,25) (f) {$\{\mathbf{1,2}\}$};

\filldraw[dashed, thick,fill=red] (a) to (b1);
\draw[dashed] (a) to (b2);
\draw[dashed] (a) to (b3);
\draw[dashed] (a) to (b4);

\draw[dashed] (b1) to (c1);
\draw[dashed] (b1) to (c2);

\draw[dashed] (b2) to (c3);
\draw[dashed] (b2) to (c4);

\draw[-, ultra thick] (b3) to (c2);
\draw[dashed] (b3) to (c3);
\draw[dashed] (b3) to (c5);
\draw[dashed] (b3) to (c6);

\draw[dashed] (b4) to (c1);
\draw[-,ultra thick] (b4) to (c4);
\draw[dashed] (b4) to (c5);
\draw[dashed] (b4) to (c6);

\draw[dashed] (c1) to (d1);
\draw[dashed] (c1) to (d2);
\draw[dashed] (c1) to (d3);

\draw[dashed] (c2) to (d2);
\draw[dashed] (c2) to (d3);

\draw[-,ultra thick] (c3) to (d1);
\draw[dashed] (c3) to (d4);
\draw[dashed] (c3) to (d5);

\draw[dashed] (c4) to (d1);
\draw[dashed] (c4) to (d4);
\draw[dashed] (c4) to (d5);

\draw[ultra thick] (c5) to (d3);
\draw[dashed] (c5) to (d4);
\draw[dashed] (c5) to (d6);

\draw[ultra thick] (c6) to (d5);
\draw[dashed] (c6) to (d2);
\draw[dashed] (c6) to (d6);

\draw[dashed] (d1) to (e1);
\draw[dashed] (d1) to (e2);

\draw[dashed] (d1) to (c2);

\draw[ultra thick] (d2) to (e1);
\draw[dashed] (d2) to (e3);

\draw[dashed] (d3) to (e2);
\draw[dashed] (d3) to (e3);

\draw[ultra thick] (d4) to (e2);
\draw[dashed] (d4) to (e4);

\draw[dashed] (d5) to (e1);
\draw[dashed] (d5) to (e4);

\draw[ultra thick] (d6) to (e3);
\draw[ultra thick] (d6) to (e4);

\draw[dashed] (e1) to (f);
\draw[dashed] (e2) to (f);
\draw[ultra thick] (e3) to (f);
\draw[ultra  thick] (e4) to (f);

\end{tikzpicture} 
\caption{Cell decomposition of the complete quadrics for $n=3$.}
\label{fig:Cells}
\end{figure}
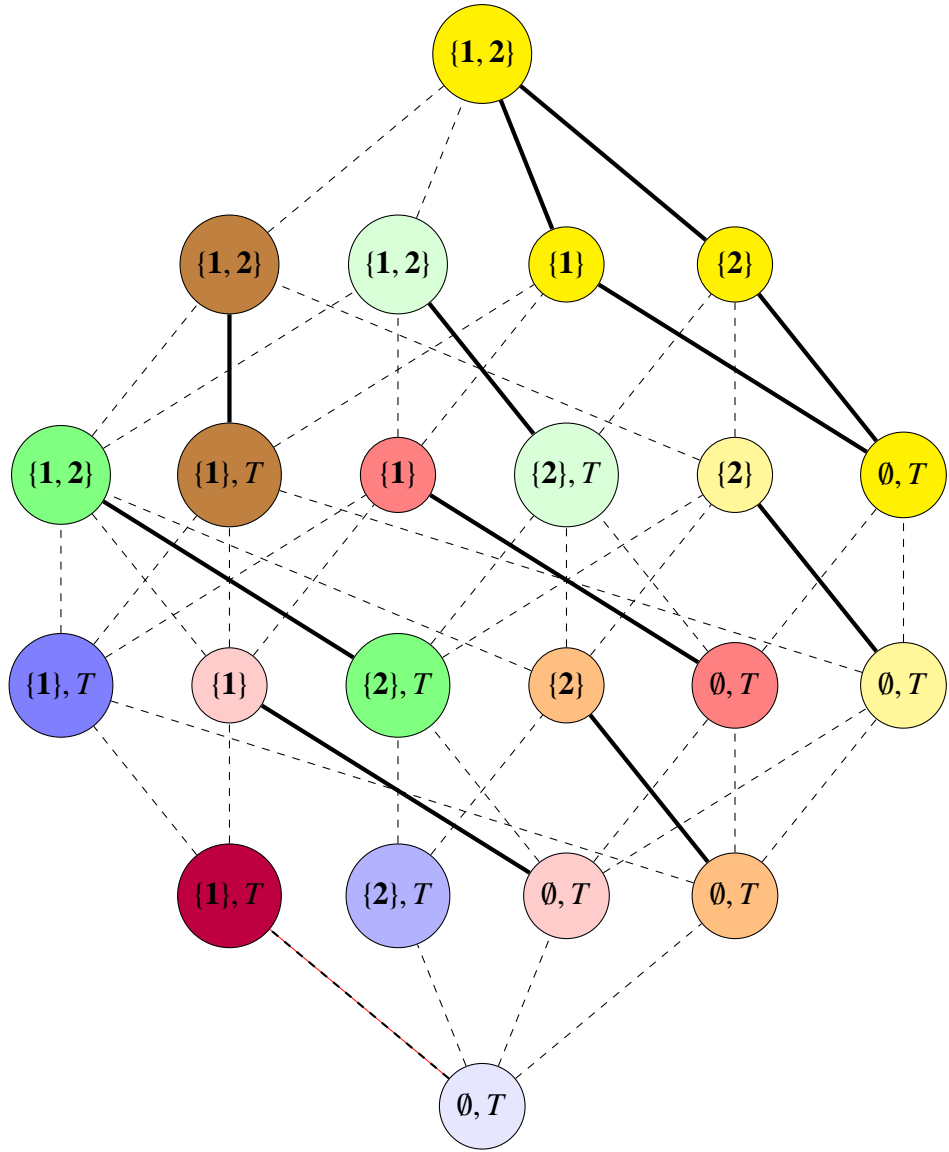

\subsection{Unipotent Fixed Flags}

We need the following elementary theorem on the fixed point loci of any partial flag variety $\mt{SL}_n / P$ 
under the action of a one dimensional unipotent subgroup $U' \hookrightarrow \mt{SL}_n$.  
Such loci are completely classified by Spaltenstein \cite{Spaltenstein76} and Shimomura \cite{Shimomura80}.

\begin{Theorem}\label{T:U' fixed flags}
Fix a one-dimensional unipotent subgroup $U' \hookrightarrow \mt{SL}_n$ with the Lie algebra $\mf{u}'= \mt{Lie}(U')$.
\begin{enumerate}
\item The fixed point locus $(\mt{SL}_n / P)^{U'}$ is non-empty.
\item If a non-zero element $e \in \mathfrak{u}'$ is regular, i.e. has a single Jordan block, 
then $(\mt{SL}_n / P)^{U'}$ consists of a single point.
\item If $(\mt{SL}_n / B)^{U'}$ is a single point, then any non-zero $e \in \mathfrak{u'}$ is regular.
\end{enumerate}
\end{Theorem}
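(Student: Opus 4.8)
The plan is to reduce the entire statement to linear algebra about the nilpotent operator $e$. Since $U'$ is a one-parameter unipotent subgroup, $U' = \{\exp(te) : t \in \C\}$ for any fixed nonzero $e \in \mf{u}'$, and a subspace $W \subseteq \C^n$ is $U'$-stable precisely when $eW \subseteq W$ (differentiate at $t = 0$ for one direction; for the other, both $\exp(te)$ and its inverse $\exp(-te)$ preserve $W$). Hence a flag is fixed by $U'$ if and only if every subspace appearing in it is $e$-invariant, so $(\mt{SL}_n/P)^{U'}$ is identified with the set of flags of the type prescribed by $P$ all of whose members are $e$-invariant. For part (1) I would then invoke the Borel fixed point theorem: $U'$ is a connected solvable group acting on the complete variety $\mt{SL}_n/P$, so the fixed locus is nonempty. (Alternatively one builds an invariant flag directly, using $\ker e \neq 0$ and passing inductively to $\C^n/\ker e$.)

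For part (2), fix a Jordan basis $v_1, \dots, v_n$ for the single Jordan block, normalized so that $e v_i = v_{i-1}$ (with $e v_1 = 0$), and classify the $e$-invariant subspaces. The key computation is this: if $W \neq 0$ is $e$-invariant and $m$ is the largest index occurring with nonzero coefficient in some element of $W$, then applying the appropriate powers of $e$ to that element forces $v_1, \dots, v_m \in W$, while conversely every element of $W$ lies in $\langle v_1, \dots, v_m \rangle$; hence $W = \langle v_1, \dots, v_m \rangle = \ker e^m$. Thus there is exactly one $e$-invariant subspace of each dimension $0, 1, \dots, n$, so for every parabolic $P$ there is a unique flag of the corresponding type with $e$-invariant members, giving $|(\mt{SL}_n/P)^{U'}| = 1$.

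For part (3) I argue the contrapositive. If $e$ is not regular it has at least two Jordan blocks, so $\dim \ker e \geq 2$. In any complete $e$-invariant flag $0 \subset W_1 \subset \cdots \subset W_n = \C^n$ the line $W_1$ is $e$-invariant, and nilpotence of $e$ forces $W_1 \subseteq \ker e$. As $\dim \ker e \geq 2$ there are infinitely many choices for the line $W_1$; for each such line $e$ descends to a nilpotent operator on $\C^n/W_1$, which by part (1) admits a complete invariant flag, and pulling this back extends $W_1$ to a complete $e$-invariant flag. Distinct choices of $W_1$ give distinct flags, since they already differ in their first term, so $(\mt{SL}_n/B)^{U'}$ has more than one point; since $\mf{u}'$ is one-dimensional, the conclusion then holds for every nonzero $e \in \mf{u}'$. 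The only genuine content is the subspace classification in part (2), which I expect to be the main step; part (3) is a short deduction, and the single point needing care there is the extension claim, namely that every line in $\ker e$ completes to a full invariant flag, which is exactly what part (1) supplies for the quotient.
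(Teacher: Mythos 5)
Your argument is correct and complete. Note that the paper does not actually prove this theorem: it labels the statement as elementary and defers to the classification of unipotent fixed-point loci on partial flag varieties by Spaltenstein and Shimomura. Your write-up therefore supplies a self-contained proof where the paper gives only a citation, and all the steps check out. The reduction of $U'$-fixedness of a flag to $e$-invariance of each subspace is the right starting point, part (1) via the Borel fixed point theorem is standard, and the heart of the matter is exactly where you locate it: for a regular nilpotent $e$ with Jordan basis $e v_i = v_{i-1}$, the only $e$-invariant subspaces are $\ker e^m = \langle v_1,\dots,v_m\rangle$, which immediately gives uniqueness of the fixed flag of any prescribed type. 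Part (3) is also handled correctly: a non-regular $e$ has $\dim\ker e\geq 2$, every $e$-invariant line lies in $\ker e$ by nilpotence, and each such line extends to a complete $e$-invariant flag by applying part (1) to the induced operator on the quotient, yielding infinitely many fixed points of $\mt{SL}_n/B$. The one statement that deserves the explicit justification you give it is precisely this extension step; with that in place the contrapositive is airtight. In short, your proof is a valid elementary replacement for the external references, and is arguably preferable in context since parts (2) and (3) are exactly what the proofs of Theorem \ref{T:special = regular} rely on.
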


\section{$\mt{SL}_2$-regular Subvarieties of Complete Quadrics}\label{S:regular=special}

\begin{Definition}\label{D:special subset}
A subset $I \subset [n-1]$ is \emph{special} if it does not contain any consecutive numbers.  
Equivalently, $I = \{ i_1 < i_2 < \dots < i_s \} \subset [n-1]$ is special if $i_{j+1} - i_{j} \geq 2$ for $j = 1, 2, \dots, s-1$.
\end{Definition}

\begin{Remark}\label{R:spec contain}
Given a special subset $I \subset [n-1]$, any subset $K \subset I$ is also special.
\end{Remark}

\begin{Theorem}\label{T:special = regular}
Let $I \subset [n-1]$.  The following are equivalent:
\begin{enumerate}
\item $I$ is special; \label{main thm 1}
\item $\mathscr{X}^I$ is $\mt{SL}_2$-regular; \label{main thm 2}
\item $\mathscr{O}^I$ contains a $T$-fixed point. \label{main thm 3}
\end{enumerate}
\end{Theorem}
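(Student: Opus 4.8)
The plan is to prove the chain of equivalences by treating (1)$\Leftrightarrow$(3) and (1)$\Leftrightarrow$(2) separately. The first is a direct computation with $T$-fixed points; the second uses the projection $\pi_I$ of \eqref{E:pi_K} to reduce the $\mt{SL}_2$-regularity of $\ms{X}^I$ to that of the fibers of $\pi_I$, which are products of lower-dimensional complete quadric varieties $\ms{X}_d$. Everything will hinge on a base-case dichotomy: $\ms{X}_d$ is $\mt{SL}_2$-regular if and only if $d\le 2$.

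For (1)$\Leftrightarrow$(3) I would first note that $(\mathcal{F},\mathcal{Q})$ is $T$-fixed exactly when $\mathcal{F}$ is a coordinate flag and each $Q_i$ is a $T$-fixed nondegenerate quadric on $V_i/V_{i-1}$. Writing $Q_i$ as a symmetric matrix, the entry in position $(j,k)$ has $T$-weight $-\varepsilon_j-\varepsilon_k$, and these weights are pairwise distinct on the maximal torus of $\mt{SL}_n$; hence a $T$-fixed class must be supported on a single pair $\{j,k\}$, which is nondegenerate on an $m$-dimensional space only for $m=1$ (a diagonal entry) or $m=2$ (an anti-diagonal block). Since the flag of a point of $\ms{O}^I$ has interior dimensions exactly $I^{\text{c}}$, its graded pieces have dimensions equal to the successive gaps of $\{0\}\cup I^{\text{c}}\cup\{n\}$, so $\ms{O}^I$ contains a $T$-fixed point iff all these gaps are at most $2$; by Definition \ref{D:special subset} this is precisely the condition that $I$ be special.

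For (1)$\Rightarrow$(2), assume $I$ is special and take $e$ regular nilpotent with $U'=\exp(\C e)$. By Theorem \ref{T:U' fixed flags}(2) the base $\mt{SL}_n/P_I$ has a unique $U'$-fixed flag $\mathcal{F}_0$, so all $U'$-fixed points of $\ms{X}^I$ lie in $\pi_I^{-1}(\mathcal{F}_0)\cong\prod_i \ms{X}_{d_i}$; because the unipotent radical of $P_I$ acts trivially on the graded pieces, $U'$ acts through its image in the Levi, which is a regular nilpotent on each factor. As $I$ is special every $d_i\le 2$, and each $\ms{X}_{d_i}$ has a unique $U'$-fixed point ($\ms{X}_1$ is a point, while for $\ms{X}_2$ the regular unipotent fixes no smooth conic, so its unique fixed point sits in the closed orbit), whence $\ms{X}^I$ is $\mt{SL}_2$-regular. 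For (2)$\Rightarrow$(1) I argue contrapositively: if $I$ is not special some gap has $d_{i_0}\ge 3$, and for an arbitrary one-parameter unipotent $U'$ I must find two fixed points. If $(\mt{SL}_n/P_I)^{U'}$ is not a single point, two fixed flags already give two fixed points of $\ms{X}^I$; otherwise the fiber over the unique fixed flag is $\prod_i\ms{X}_{d_i}$, and it suffices that the factor $\ms{X}_{d_{i_0}}$ (with whatever nilpotent $U'$ induces on it, the trivial case being immediate) carries at least two fixed points.

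The crux, and the step I expect to be hardest, is the Key Lemma that $\ms{X}_d$ is not $\mt{SL}_2$-regular for $d\ge 3$, i.e.\ every one-parameter unipotent has at least two fixed points on $\ms{X}_d$. For a non-regular $e$ the fiber $(\mt{SL}_d/B)^{U'}$ contains more than one point by Theorem \ref{T:U' fixed flags}(3), and each fixed full flag supports a distinct fixed complete quadric in the closed orbit. For the regular nilpotent the unique fixed full flag again gives one fixed point in the closed orbit, and to produce a second I would determine when a regular unipotent of $\mt{SL}_m$ fixes a nondegenerate quadric: via the nilpotent-orbit theory of $\mt{O}_m$ this happens exactly when $m$ is odd, yielding a second fixed point on a one-step flag---of type $(d)$ if $d$ is odd and of type $(d-1,1)$ if $d$ is even (here $d-1\ge 3$ is odd)---which lies in a non-closed orbit and is therefore distinct from the first. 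The delicate points will be the precise tracking of the induced nilpotent on the graded pieces in the fibration and the verification of the $d=2$ base case; by contrast the weight computation behind (1)$\Leftrightarrow$(3) and the Springer-fiber input for non-regular nilpotents should follow routinely from Theorem \ref{T:U' fixed flags}.
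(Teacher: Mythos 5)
Your proof is correct, and its skeleton matches the paper's: both hinge on the computation that the only quadric on a two-dimensional graded piece fixed by the regular unipotent is degenerate (giving $(\ref{main thm 1})\Rightarrow(\ref{main thm 2})$ via the unique $U'$-fixed flag and the fibration $\pi_I$), and both prove $(\ref{main thm 2})\Rightarrow(\ref{main thm 1})$ contrapositively by reducing to the regular nilpotent and exhibiting extra fixed points when some graded piece has dimension at least $3$. The genuine divergences are two. First, for $(\ref{main thm 1})\Leftrightarrow(\ref{main thm 3})$ the paper simply cites Strickland; your weight argument (each entry of a symmetric matrix has $T$-weight $-\varepsilon_j-\varepsilon_k$, these are pairwise distinct, and a single weight vector is nondegenerate only in dimensions $1$ and $2$) is essentially a self-contained reconstruction of her Proposition 2.1. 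Second, and more substantively, in $(\ref{main thm 2})\Rightarrow(\ref{main thm 1})$ the paper exploits the suborbit $\ms{O}^{\{a,a+1\}}\subset\ms{X}^I$ to isolate a single three-dimensional graded piece, and then a bare-hands $3\times 3$ computation produces a positive-dimensional family of $U'$-fixed nondegenerate quadrics $\bigl(\begin{smallmatrix}0&0&c\\0&-c&0\\c&0&f\end{smallmatrix}\bigr)$; you instead work with the full fiber $\prod_i\ms{X}_{d_i}$ over the unique fixed flag of $\mt{SL}_n/P_I$, which forces you to treat an arbitrary block size $d\geq 3$ and to invoke the classification of nilpotent orbits of $\mf{so}_m$ (a regular nilpotent preserves a nondegenerate symmetric form iff $m$ is odd) to manufacture a second fixed point in a non-closed orbit, of flag type $(d)$ or $(d-1,1)$ according to parity. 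Both routes are valid: the paper's reduction to $d=3$ is more elementary and would have spared you the general Key Lemma entirely, while your version is more structural and additionally identifies which orbits of $\ms{X}_d$ carry $U'$-fixed points; your case analysis (non-regular $e$ on the base, then regular/non-regular/zero induced nilpotent on the offending block) is complete, with Borel's fixed point theorem guaranteeing a fixed point in each fiber over a fixed flag.
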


\begin{Remark}
The equivalence $(\ref{main thm 1}) \Leftrightarrow (\ref{main thm 3})$ in Theorem \ref{T:special = regular} is due to Strickland \cite[Proposition 2.1]{Strickland86}.
\end{Remark}

\begin{proof}[Proof of $(\ref{main thm 1}) \Rightarrow (\ref{main thm 2})$]
Let $I$ be a special subset of $[n-1]$. Let
\begin{equation}\label{E:e}
e = \begin{pmatrix}0 & 1 & 0 & \dots & 0 \\
0 & 0 & 1 & \dots & 0 \\
\vdots & \vdots & \vdots & \ddots & \vdots \\
0 & 0 & 0 & \dots & 1 \\
0 & 0 & 0 & \dots & 0 \end{pmatrix}
\end{equation}
and
\begin{equation*}\label{E:h}
h = \begin{pmatrix}2n & 0 & 0 & \dots & 0 \\
0 & 2n - 2 & 0 & \dots & 0 \\
0 & 0 & 2n - 4 & \dots & 0 \\
\vdots & \vdots & \vdots & \ddots & \vdots \\
0 & 0 & 0 & \dots & 2 \end{pmatrix}.
\end{equation*}
A routine calculation shows that $[h, e] = 2e$, so let $\phi: \mt{SL}_2 \rightarrow \mt{SL}_n$ be the associated embedding.

Next we show that $\mathscr{X}^I$ is $\mt{SL}_2$-regular by proving that the unique $U'$-fixed point of $\mathscr{X}^I$ 
is the standard flag in $\C^n$, viewed as a point in $\mathscr{O}^{[n-1]} \cong \mt{SL}_n/B$.  
Since $\pi_K : \mathscr{O}^K \rightarrow \mt{SL}_n / P_K$ is $\mt{SL}_n$-equivariant (\ref{E:pi_K}), any $U'$-fixed point 
$\mc{P}=(\mathcal{F}, \mathcal{Q} = (Q_1, \dots, Q_s)) \in \mathscr{O}^K \subset \mathscr{X}^I$ maps to a $U'$-fixed partial flag 
$\mathcal{F} = \pi_K(\mc{P})$.  By Theorem \ref{T:U' fixed flags}, there is a unique $U'$-fixed partial flag $\mathcal{F}_K$ in each 
$\mt{SL}_n / P_K$.  Moreover, writing $K^{\text{c}} = \{k_1 < k_2 < \dots < k_t \}$, $\mathcal{F}_K$ is the flag whose $i$-th 
vector space is spanned by the first $k_i$ standard basis vectors.

For each $K \subset I$, we determine the $U'$-fixed locus of the fiber of $\pi_K$ over $\mathcal{F}_K$.  Since the flag
$$
\mathcal{F}_K := (0) = V_0 \subset V_1 \subset V_2 \subset \dots \subset V_{t-1} \subset V_t = \C^n
$$
is $U'$-fixed, the action of $u \in U'$ on a quadric $Q_i$ defined by the symmetric matrix $A_i$ in $V_i / V_{i-1}$ 
is given by restricting $u$ to a linear transformation on $V_i / V_{i-1}$. 
Because $K$ is special, $\dim V_i / V_{i-1} \leq 2$.  
Moreover, the matrix of $u$ with respect to the basis of standard basis vectors in 
$V_i- V_{i-1}$ is $ \begin{pmatrix} 1 & 1 \\ 0 & 1 \end{pmatrix}$ if $\dim (V_i / V_{i-1}) = 2$.

If
$$
A = \begin{pmatrix} a & b \\ b & c \end{pmatrix}
$$
defines such a quadric on a two-dimensional vector space, then
$$
u \cdot A = \begin{pmatrix} a & b-a \\ b-a & c-2b+a \end{pmatrix}.
$$
The only fixed quadric is degenerate and defined by
$A = \begin{pmatrix} 0 & 0 \\ 0 & 1 \end{pmatrix}.$

Therefore, if the point $(\mathcal{F}_K, \mathcal{Q})$ is $U'$-fixed, then each $V_i / V_{i-1}$ is one-dimensional.  
In other words, $K = \emptyset$ and $(\mathcal{F}_K, \mathcal{Q})$ is the standard flag in $\mt{SL}_n / B$.
\end{proof}

\begin{proof}[Proof of $(\ref{main thm 2}) \Rightarrow (\ref{main thm 1})$]
Assume that $I$ is not special.  Let $\phi : \mt{SL}_2 \rightarrow \mt{SL}_n$ be any homomorphism, 
giving rise to $B' = T' U' \subset \mt{SL}_n$.
To show that $\mathscr{X}^I$ is not regular, we must show that $U'$ does not have a unique fixed point.  
First, consider the action of $U'$ on $\mt{SL}_n / B$.  By Theorem \ref{T:U' fixed flags}, 
there are always $U'$-fixed flags and there is a unique $U'$-fixed flag if and only if any non-zero $e \in \mathfrak{u}'$ is regular.
Thus, we assume that the Jordan form of $e$ is 
$$\begin{pmatrix}
0 & 1 & 0 & \dots & 0 \\
0 & 0 & 1 & \dots & 0 \\
\vdots & \vdots & \vdots & \ddots & \vdots \\
0 & 0 & 0 & \dots & 1 \\
0 & 0 & 0 & \dots & 0
\end{pmatrix}.$$

Since $I$ is not special, there exists $a$, $1 \leq a < n - 1$, such that $a, a+1 \in I$.  Let $K = \{a, a+1\}$.  
Since $K \subset I$, $\mathscr{O}^K \subset \mathscr{X}^I$.  Moreover, $U'$-fixed points in $\mathscr{O}^K$ 
are in canonical bijection with the quadrics, defined on the three-dimensional vector space spanned by the $a$-th, 
$(a+1)$-st, and $(a+2)$-nd standard basis vectors, that are fixed by the restricted action of $U'$.  
Without loss of generality, we can assume that $U' \subset \mt{SL}_3$ and
$$ e = \begin{pmatrix} 0 & 1 & 0 \\ 0 & 0 & 1 \\ 0 & 0 & 0 \end{pmatrix} \in \mathfrak{u}'.$$

A standard Lie theory calculation using (\ref{E:action}) shows that a quadric defined by
$$A = \begin{pmatrix} a & b & c \\ b & d & e \\ c & e & f \end{pmatrix}$$
is fixed by $U'$ if and only if $e^{\top} A + A^{\top} e^{\top} = 0$ if and only if
$$A = \begin{pmatrix} 0 & 0 & c \\ 0 & -c & 0 \\ c & 0 & f \end{pmatrix}.$$

Thus, $\ms{O}^K$ contains a positive dimensional family of $U'-$fixed quadrics, and consequently 
$\mathscr{X}^I$ is not regular.

\end{proof}

\begin{Remark}
The proof that $(\ref{main thm 2}) \Leftrightarrow (\ref{main thm 3})$ in Theorem \ref{T:special = regular} is achieved by 
using the explicit combinatorics at hand to show that each of the two statements is equivalent to $(\ref{main thm 1})$.  
It is natural to wonder whether either direction of the implication holds in a more general setting.
\end{Remark}

\section{Poincar\'e polynomial of $\ms{X}^I$}\label{S:KM}

We apply Theorem \ref{T:AC} to compute the cohomology of $\mathscr{X}^I$ when $I$ is a special subset of $[n-1]$.

\begin{Proposition}\label{T:AC formulation}
If $I \subset [n-1] $ is special, then the $q$-Poincar\'e polynomial of $\mathscr{X}^I$ is equal to
\begin{align}\label{E:Poincare product}
P_{\mathscr{X}^I}(q) = \left( \frac{1-q^3}{1-q^2} \right) ^{|I|}
\prod_{k=1}^n \frac{1 - q^k}{1 - q}.
\end{align}
\end{Proposition}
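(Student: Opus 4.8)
The plan is to feed $\mathscr{X}^I$ into Theorem \ref{T:AC}, so that the entire computation reduces to understanding the $T'$-module structure of the tangent space at the unique $U'$-fixed point. Since $I$ is special, Theorem \ref{T:special = regular} guarantees that $\mathscr{X}^I$ is $\mt{SL}_2$-regular; I would use the very embedding $\phi\colon \mt{SL}_2 \to \mt{SL}_n$ constructed in the proof of $(\ref{main thm 1})\Rightarrow(\ref{main thm 2})$, for which the unique $U'$-fixed point $p$ is the standard complete flag, lying in the closed orbit $Z \cong \mt{SL}_n/B \subseteq \mathscr{X}^I$. By the Akyildiz--Carrell theory recalled in Section \ref{S:regular}, the minus cell $C_p^-$ is open and dense, every $T'$-weight on $T_p\mathscr{X}^I$ is negative, and a $T'$-equivariant basis of the cotangent space supplies the exponents entering the product of Theorem \ref{T:AC}. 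Thus the whole problem is local at $p$.

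Next I would decompose $T_p\mathscr{X}^I$ as a $T'$-module. Because $\mathscr{X}^I = \bigcap_{j\notin I}\mathscr{X}_j$ is a transverse intersection of boundary divisors through $p$ (Definition \ref{D:wonderful embedding}), the tangent space splits as
\begin{equation*}
T_p\mathscr{X}^I = T_p(\mt{SL}_n/B) \oplus \bigoplus_{i\in I} L_i,
\end{equation*}
where each $L_i$ is the one-dimensional normal (quadric) direction attached to the boundary divisor indexed by $i$. On the flag summand $T_p(\mt{SL}_n/B) \cong \bigoplus_{\alpha>0}\mathfrak{g}_{-\alpha}$, the line $\mathfrak{g}_{-\alpha}$ has $T'$-weight $-\alpha(h) = -2\,\mt{ht}(\alpha)$, which corresponds to the factor $(1-q^{\mt{ht}(\alpha)+1})/(1-q^{\mt{ht}(\alpha)})$ in Theorem \ref{T:AC} (just as a simple-root weight $-2$ yields exponent $1$). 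These are exactly the factors of the Akyildiz--Carrell computation for the full flag variety, and the type $A_{n-1}$ identity
\begin{equation*}
\prod_{\alpha>0}\frac{1-q^{\mt{ht}(\alpha)+1}}{1-q^{\mt{ht}(\alpha)}} = \prod_{k=1}^n\frac{1-q^k}{1-q}
\end{equation*}
recovers the Kostant--Macdonald factor; I would either cite this or derive it by noting that $A_{n-1}$ has exactly $n-k$ positive roots of height $k$ and telescoping the resulting product $\prod_{k=1}^{n-1}\bigl((1-q^{k+1})/(1-q^k)\bigr)^{n-k}$.

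The crux, and the step I expect to be the main obstacle, is pinning down the weight of each quadric line $L_i$. Rather than appealing to a normalization convention for the spherical roots, I would compute the $T'$-action on an explicit coordinate transverse to the closed orbit, in the spirit of the symmetric-matrix calculation of Section \ref{S:regular=special}: restricting the action (\ref{E:action}), $g\cdot A = (g^\top)^{-1}Ag^{-1}$, to the rank-dropping entry on a two-dimensional successive quotient $V_i/V_{i-1}$, its quadratic dependence on $g$ forces the weight of $L_i$ to be $-4$, i.e.\ twice the weight $-2$ of a simple-root flag line; this corresponds to exponent $2$ in Theorem \ref{T:AC}, hence to the factor $(1-q^3)/(1-q^2)$. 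Combining the two families of directions then gives
\begin{equation*}
P_{\mathscr{X}^I}(q) = \left(\prod_{k=1}^n\frac{1-q^k}{1-q}\right)\left(\frac{1-q^3}{1-q^2}\right)^{|I|},
\end{equation*}
which is the claimed identity; as a consistency check, the degree $\binom{n}{2}+|I|$ of the right-hand side equals $\dim\mathscr{X}^I$.
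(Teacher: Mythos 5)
Your proposal is correct and follows essentially the same route as the paper: apply Theorem \ref{T:AC} at the unique $U'$-fixed point, split the tangent space into the flag part (weights $-2\,\mathrm{ht}(\alpha)$, yielding the Kostant--Macdonald factor) and the $|I|$ normal lines to the closed orbit (weight $-4$, yielding the factors $(1-q^3)/(1-q^2)$). The only cosmetic difference is that you compute the normal weight $-4$ directly from the quadratic dependence of the action on symmetric matrices, whereas the paper cites \cite{DP83} for the $T$-weight $-2\alpha_j$ of the normal line; both give the same answer.
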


\begin{proof}
Fix a regular $\mt{SL}_2$-action on $\mathscr{X}^I$ corresponding to
$$ e = \begin{pmatrix} 0 & 1 & 0 & \dots & 0 \\ 0 & 0 & 1 & \dots & 0 \\
\vdots & \vdots & \vdots & \ddots & \vdots \\ 0 & 0 & 0 & \dots & 1 \\
0 & 0 & 0 & \dots & 0 \end{pmatrix} \in \mathfrak{u} \text{ and }
h = \begin{pmatrix} n-1 & 0 & \dots & 0 \\ 0 & n-3 & \dots & 0 \\
\vdots & \vdots & \ddots & \vdots \\ 0 & 0 & \dots & -(n-1) \end{pmatrix} \in \mathfrak{t}
$$
so that $T'$ is included in $T$ via
$$
\begin{pmatrix} t & 0 \\ 0 & t^{-1} \end{pmatrix} \mapsto
\begin{pmatrix} t^{n-1} & 0 & \dots & 0 \\ 0 & t^{n-3} & \dots & 0 \\
\vdots & \vdots & \ddots & \vdots \\ 0 & 0 & \dots & t^{-(n-1)} \end{pmatrix}.
$$
From Theorem \ref{T:AC} and the discussion preceding it, to compute the Poincar\'e polynomial of $\mathscr{X}^I$, 
we must understand the $T'$-weight decomposition of the tangent space $T_{\mathcal{P}}(\mathscr{X}^I)$ of the unique 
$U'$-fixed point $\mathcal{P}$, corresponding to the standard flag in the complete flag variety 
$\mt{SL}_n / B \subset \mathscr{X}^I$. 
Since $T'$ is a subtorus of the maximal torus $T$ of $\mt{SL}_n$, consider the $T$-equivariant decomposition
$$
T_{\mathcal{P}}(\mathscr{X}^I) = T_{\mathcal{P}}(\mt{SL}_n / B) \oplus N_{\mathcal{P}}(\mt{SL}_n / B, \mathscr{X}^I).
$$
Here, $N_{\mathcal{P}}(\mt{SL}_n / B, \ms{X}^I)$ denotes the fiber of the normal bundle of $\mt{SL}_n / B$ in $\ms{X}^I$ 
at the point $\mathcal{P}$.

As a $T$-module, $T_{\mathcal{P}}(\mt{SL}_n / B) \cong \mathfrak{u}^{-} = {\ds \oplus_{\alpha > 0}} \mathfrak{u}_{-\alpha}$.  
Since $T'$ has weight $2$ acting on any simple positive root space, the weight of $T'$ on $\mathfrak{u}_{-\alpha}$ is 
$-2 \text{ht}(\alpha)$, where $\text{ht}(\alpha)$ is the height of $\alpha$ (c.f. \cite{Bourbaki}).  
The height of $\alpha = \varepsilon_i - \varepsilon_j$, $i< j$ is $j-i$.

Since $\mathscr{X}^I$ is a wonderful embedding of $\mathscr{O}^I$, $\mt{SL}_n / B$ is a transverse intersection of the 
$T$-stable subvarieties $\mathscr{X}^K$ where $K \subset I$ has cardinality $|I|-1$.  Thus, as a $T$-module,
$$
N_{\mathcal{P}}(\mt{SL}_n / B, \mathscr{X}^I) \cong \bigoplus_{j \in I} T_{\mathcal{P}}(\mathscr{X}^I) / T_{\mathcal{P}}(\mathscr{X}^{I - \{j\}}).$$
Since the $T$-weight of $T_{\mathcal{P}}(\mathscr{X}^I) / T_{\mathcal{P}}(\mathscr{X}^{I - \{j\}})$ is $-2 \alpha_j$ \cite{DP83}, its 
$T'$-weight is $-4$.

Combining these calculations with Theorem \ref{T:AC} gives (\ref{E:Poincare product}), using the elementary identity
$$
\prod_{\alpha > 0} \frac{ 1- q^{ht(\alpha)+1} }{1-q^{ht(\alpha)}} = \prod_{1 \leq i < j \leq n} \frac{1 - q^{j-i+1}}{1-q^{j-i}} = \prod_{k=1}^n \frac{1-q^{k}}{1-q}.
$$
\end{proof}

We interpret Theorem \ref{T:AC formulation} as a generalization of the classical Kostant-Macdonald identity 
(\cite{Kostant59, Macdonald72}) for the complete flag variety:
\begin{align}\label{A:kostantmacdonald}
\sum_{\pi \in S_n} q^{\ell(\pi)} = [n]_{q}! := \prod_{k=1}^n \frac{1 - q^k}{1-q}.
\end{align}
Aky{\i}ld{\i}z and Carrell recovered (\ref{A:kostantmacdonald}) as a corollary of Theorem \ref{T:AC} applied to the variety 
$X = \mt{SL}_n / B$.

In order to derive a similar ``sum = product'' identity in the case of the varieties $\mathscr{X}^I$, $I$ special, 
we compute $P_{\mathscr{X}^I}(q)$ by describing a decomposition into cells and applying Theorem \ref{T:BB}.  
To do so, we make use of a result of De Concini and Springer \cite{DS85} to reduce the calculation to that of a cell 
decomposition for $\mathscr{X}$, which was first computed by Strickland in \cite{Strickland86}.

Let $K \subset [n-1]$ be special and let $W = S_n$ be the symmetric group on $[n]$.
Let $W_K$ be the parabolic subgroup of $W$ generated by transpositions $(i, i+1)$ for $i \in K$, let $W^K$ be the 
set of minimal coset representatives of $W / W_K$, and let $w_{0,K} = \ds{\prod_{i \in K} (i, i+1)}$ denote the longest element of 
$W_K$. The $T$-fixed points in $\mathscr{O}^K$ are indexed by $W^K$ \cite[Proposition 2.3]{Strickland86}.

$W$ acts on the free abelian group generated by $\{ \varepsilon_1, \varepsilon_2, \dots, \varepsilon_n \}$ by 
$w \cdot \varepsilon_i = \varepsilon_{w(i)}$ and the simple roots $\alpha_i := \varepsilon_i - \varepsilon_{i+1}$ lie in this group.  
We write $v = \sum_{i=1}^n c_i \varepsilon_i > 0$ (resp., $< 0$) if the first non-zero coefficient $c_i$ 
which appears in the decomposition is positive (resp., negative).  Interpreting the $\varepsilon_i$ as characters of 
$\mt{SL}_n$, $v > 0$ is equivalent to the corresponding character being positive along a suitable one-dimensional torus 
$T' \subset \mt{SL}_n$. Define the set
\begin{equation*}\label{E:R set}
R_K(w) := \{ i \in K^{\text{c}} : w(\alpha_i + w_{0,K}(\alpha_i)) < 0 \}.
\end{equation*}

\begin{Proposition}[\cite{Strickland86}, Theorem 2.7 and Proposition 2.6]\label{P:Strickland}
Let $p \in \ms{O}^K$ be a $T$-fixed point of $\ms{X}$ corresponding to $w \in W^K$.  Let $C_p^+$ 
denote the plus cell of $p$ in $\ms{X}$ associated to the action of $T'$.  Then
$$
\dim C_p^+ = \ell(w) + |K| + |R_K(w)|.
$$
\end{Proposition}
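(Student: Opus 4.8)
The plan is to compute $\dim C_p^+$ through the Bialynicki--Birula theorem (Theorem~\ref{T:BB}), which identifies $C_p^+$ with the subspace $T_p^+\ms{X}$ of $T_p\ms{X}$ spanned by the positive $T'$-weight spaces. Thus $\dim C_p^+$ equals the number of $T$-weights of $T_p\ms{X}$ that are positive, where positivity is read off the ordering $v>0$ fixed just before the statement; this ordering records the sign of the pairing with the chosen generic one-parameter subgroup $T'$. So the whole problem reduces to writing down the $T$-weights of the tangent space at the fixed point $p\leftrightarrow w\in W^K$ and counting the positive ones, and the three summands $\ell(w)$, $|K|$, $|R_K(w)|$ will emerge from three geometrically natural pieces of $T_p\ms{X}$.

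First I would decompose the tangent space $T$-equivariantly as
$$T_p\ms{X}=T_p\ms{X}^K\oplus N_p(\ms{X}^K,\ms{X}),\qquad T_p\ms{X}^K=H_p\oplus F_p,$$
where, using the fibration $\pi_K:\ms{X}^K\to\mt{SL}_n/P_K$ of (\ref{E:pi_K}), the space $H_p\cong T_{\pi_K(p)}(\mt{SL}_n/P_K)$ is a $T$-equivariant horizontal complement and $F_p$ is the tangent space to the fiber of $\pi_K$ through $p$ (a product of smaller varieties of complete quadrics). The horizontal part is the easy one: $\pi_K(p)$ is the $T$-fixed flag indexed by $w\in W^K$, its attracting cell in $\mt{SL}_n/P_K$ is the Schubert cell $BwP_K/P_K$, and so $H_p$ contributes exactly $\ell(w)$ positive weights. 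This is precisely the count underlying the Aky{\i}ld{\i}z--Carrell recovery of the Kostant--Macdonald identity.

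Next I would handle the fiber part $F_p$. Because $K$ is special, every gap $\dim V_i/V_{i-1}$ is at most $2$, so the fiber of $\pi_K$ is a product of copies of $\ms{X}_2\cong\PP^2$ (one for each two-dimensional gap) together with points, and the number of $\PP^2$-factors is exactly $|K|$. At the point $p\in\ms{O}^K$ the quadric on each two-dimensional gap $\langle e_a,e_b\rangle$ is the smooth quadric $x_ax_b$, the unique $T$-fixed point of the corresponding $\PP^2$; a direct calculation of the sort already carried out in the proof of Theorem~\ref{T:special = regular} shows the two tangent weights there are $\pm(\varepsilon_a-\varepsilon_b)$. Exactly one of each such pair is positive, so $F_p$ contributes exactly $|K|$ positive weights, independently of $w$.

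The crux, and the step I expect to be the main obstacle, is the normal part $N_p(\ms{X}^K,\ms{X})$. Since $\ms{X}^K=\bigcap_{i\in K^{\text{c}}}X_i$ is a transverse intersection of boundary divisors, $N_p$ splits as a sum of lines indexed by $i\in K^{\text{c}}$, and everything comes down to identifying the $T$-weight of the $i$-th line. I claim this weight is $-\,w(\alpha_i+w_{0,K}(\alpha_i))$, so that the $i$-th normal direction is $T'$-positive exactly when $w(\alpha_i+w_{0,K}(\alpha_i))<0$, i.e. exactly when $i\in R_K(w)$; summing gives the final term $|R_K(w)|$. To prove the weight formula I would compute it at a distinguished base point of $\ms{O}^K$ and then translate by $w$, which accounts for the outer $w$. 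The genuinely delicate point is the appearance of $w_{0,K}$: unlike the closed-orbit case, where the boundary weight is the spherical root $2\alpha_i=\alpha_i+w_{0,\emptyset}(\alpha_i)$, here the smooth quadric $x_ax_b$ on each two-dimensional gap has isotropy interchanging $e_a\leftrightarrow e_b$, and assembling these interchanges over all gaps produces exactly the longest element $w_{0,K}=\prod_{i\in K}(i,i+1)$. I would establish the formula either from Vainsencher's iterated blow-up description (Theorem~\ref{T:V DP}(\ref{T:vainsencher quadrics})), by exhibiting the transverse direction to $\ms{X}^K$ as the direction that raises the rank across the $i$-th flag step and reading off its weight, or from the De Concini--Procesi local structure of the wonderful embedding near $\ms{O}^K$; in either case the evaluation of $\alpha_i+w_{0,K}(\alpha_i)$ reduces to the short case analysis of whether $i-1$ and $i+1$ lie in $K$. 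Combining the three contributions yields $\dim C_p^+=\ell(w)+|K|+|R_K(w)|$.
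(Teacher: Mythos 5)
The paper offers no proof of this proposition---it is imported verbatim from Strickland \cite{Strickland86}---so the only meaningful comparison is with Strickland's own argument, which, like yours, computes the $T$-weights of $T_p\ms{X}$ at each fixed point and counts the positive ones via Theorem \ref{T:BB}. Your three-part decomposition (horizontal, fiber, normal) is the right skeleton, and the first two contributions are essentially complete: the horizontal piece at the flag indexed by $w\in W^K$ contributes $\ell(w)$ positive weights, and your identification of the fiber of $\pi_K$ as a product of $|K|$ copies of $\PP^2$, each sitting at its unique interior fixed point $[x_ax_b]$ with tangent weights $\pm(\varepsilon_{w(a)}-\varepsilon_{w(b)})$, correctly yields $|K|$.

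The gap is in the third piece, which is where the entire content of the term $|R_K(w)|$ lives: you \emph{assert} that the normal line to the divisor $X_i$ has $T$-weight $-w(\alpha_i+w_{0,K}(\alpha_i))$, give a heuristic for the appearance of $w_{0,K}$, and list two methods that ``would'' establish the formula, but you never carry out the computation. This is not a cosmetic omission. The naive guess $-2w\alpha_i$, which is what the closed-orbit case suggests, is genuinely wrong once $K\neq\emptyset$: for $n=3$, $K=\{1\}$, $w=s_2$ it would make the normal direction attracting and give cell dimension $3$, whereas the correct value is $2$ (as one can check against the Betti numbers $1,2,3,3,2,1$ of $\ms{X}_3$). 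So the $w_{0,K}$-correction must be derived, for instance from the De Concini--Procesi local structure of $\ms{X}$ near the base point of $\ms{O}^K$, by exhibiting a $T$-eigencoordinate cutting out $X_i$ and reading off its weight. You should also address the choice of $T'$: the lexicographic order $v>0$ must be realized by a single one-parameter subgroup that is generic for \emph{all} the weights occurring, including the degree-two weights $w(\alpha_i+w_{0,K}(\alpha_i))$; the equally spaced $T'$ of Section \ref{S:KM} pairs to zero with some of these (e.g.\ with $\varepsilon_1-2\varepsilon_2+\varepsilon_3$ when $n=3$) and is not generic enough for this proposition.
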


\begin{Proposition}[\cite{DS85}, Lemma 4.1]\label{P:DS}
Retaining the notation of Propsition \ref{P:Strickland},
an orbit $\ms{O}^K$ intersects $C_p^+$ if and only if $K \subset I \subset R_K(w) \cup K$.
If $K \subset I$, then ${\ms{X}^I} \cap C_p^+$ is the plus cell of $\mathscr{X}^I$ containing $p$
and has dimension $\dim C_p^+ - |(I^{\text{c}} \cap R_K(w)|$.

\end{Proposition}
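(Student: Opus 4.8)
The plan is to read off both assertions from the Bialynicki--Birula description of plus cells (Theorem \ref{T:BB}) together with the local orbit structure of the wonderful embedding $\ms{X}$ at the $T$-fixed point $p\in\ms{O}^K$. I would first dispose of the easy half of the second assertion. Since $\ms{X}^I=\overline{\ms{O}^I}$ is closed and $\mt{SL}_n$-stable, it is stable under $T'$, so the $T'$-orbit of any $y\in\ms{X}^I$ stays in $\ms{X}^I$ and its limit as $t\to 0$ agrees whether computed in $\ms{X}^I$ or in $\ms{X}$. Hence $\ms{X}^I\cap C_p^+$ is precisely the plus cell of $p$ computed inside $\ms{X}^I$, and by Theorem \ref{T:BB}(2) its dimension equals the number of positive $T'$-weights on the tangent space $T_p\ms{X}^I$. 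Thus both assertions reduce to bookkeeping of $T'$-weights on tangent spaces.

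Next I would fix local coordinates at $p$ adapted to the orbit stratification \eqref{E:orbit union}. By the local structure of wonderful embeddings \cite{DP83}, near $p$ the tangent space $T_p\ms{X}$ splits $T$-equivariantly into directions tangent to $\ms{O}^K$ and normal (boundary) directions, the latter indexed by $K^{\mt{c}}$, in such a way that activating the $i$-th normal coordinate carries $p$ from $\ms{O}^K$ into $\ms{O}^{K\cup\{i\}}$. Computing the $T$-weight of the $i$-th normal direction and comparing with the definition of $R_K(w)$, that direction has positive $T'$-weight exactly when $w(\alpha_i+w_{0,K}(\alpha_i))<0$, i.e. when $i\in R_K(w)$. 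This is the origin of the summand $|R_K(w)|$ in Strickland's count (Proposition \ref{P:Strickland}), the remaining $\ell(w)+|K|$ positive weights being tangent to $\ms{O}^K$.

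Granting this, the first assertion is immediate. Locally $\ms{O}^I$ (for $K\subseteq I$) is the stratum on which the normal coordinates $z_i$ are nonzero for $i\in I\setminus K$ and zero for $i\in I^{\mt{c}}$, and $p$ lies in its closure iff $K\subseteq I$. A point of this stratum flows to $p$ iff every coordinate it activates has positive weight; in particular each $z_i$ with $i\in I\setminus K$ must be positive, forcing $I\setminus K\subseteq R_K(w)$, which together with $K\subseteq I$ is exactly $K\subseteq I\subseteq R_K(w)\cup K$. For the dimension, I note that $\ms{X}^I$ is cut out near $p$ by $z_i=0$ for $i\in I^{\mt{c}}$, so its normal directions to $\ms{O}^K$ are indexed by $I\setminus K$; hence the positive $T'$-weights on $T_p\ms{X}^I$ number $(\ell(w)+|K|)+|R_K(w)\cap I|$. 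Subtracting from $\dim C_p^+=\ell(w)+|K|+|R_K(w)|$ and using $R_K(w)\subseteq K^{\mt{c}}$ yields $\dim C_p^+-|I^{\mt{c}}\cap R_K(w)|$, as claimed.

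The main obstacle is the input of the second paragraph: rigorously producing $T$-equivariant local coordinates in which the orbit strata $\ms{O}^{K\cup S}$ become the coordinate strata (the $z_i$, $i\in S$, nonzero and the remaining boundary coordinates zero) and identifying the boundary weights with $w(\alpha_i+w_{0,K}(\alpha_i))$. This is where the spherical geometry does the real work, and I would extract it from the local structure theorem of De Concini--Procesi, organized through the projection $\pi_K$ of \eqref{E:pi_K} and the blow-up description in Theorem \ref{T:V DP}; it is precisely the computation underlying Strickland's weight count, after which the remaining steps are elementary weight bookkeeping.
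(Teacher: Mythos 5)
The paper does not actually prove this proposition; it is imported from De Concini--Springer \cite{DS85} (their Lemma 4.1), so there is no in-paper argument to compare yours against. On its own terms, your outline is the natural one and is essentially the argument one finds in the literature: reduce everything to counting positive $T'$-weights on $T_p\ms{X}^I$ via Theorem \ref{T:BB} (your first paragraph is correct as stated, since $\ms{X}^I$ is closed and $T'$-stable, so the plus cell of $p$ inside $\ms{X}^I$ is exactly $\ms{X}^I\cap C_p^+$), then use the local structure of the wonderful embedding at $p$ to split the weights into a tangential part contributing $\ell(w)+|K|$ and boundary-normal directions indexed by $K^{\text{c}}$ whose $T'$-weights are positive precisely on $R_K(w)$. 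You rightly flag that this equivariant splitting and the identification of the normal weights with $-w(\alpha_i+w_{0,K}(\alpha_i))$ is the real content; deferring it to De Concini--Procesi/Strickland is legitimate here, since Proposition \ref{P:Strickland} is already being taken as input.

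One step is too quick as written: the claim that a point of the stratum $\ms{O}^I$ flows to $p$ if and only if every boundary coordinate it activates has positive weight. The ``only if'' direction does not follow pointwise from Theorem \ref{T:BB}, because the $T'$-equivariant isomorphism $C_p^+\cong T_p^+\ms{X}$ is abstract and need not identify your chosen local coordinates $z_i$ with linear coordinates on the tangent space. The clean repair stays at the level of your first paragraph and uses the second assertion (which you prove independently) to get the first: for each $J$ with $K\subseteq J\subseteq I$, the set $C_p^+\cap\ms{X}^J$ is an affine space of dimension $d_J=\ell(w)+|K|+|R_K(w)\cap J|$, closed inside the irreducible variety $C_p^+\cap\ms{X}^I\cong\mathbb{A}^{d_I}$ (and $C_p^+\cap\ms{X}^J=\emptyset$ when $K\not\subseteq J$, since the limit $p$ would lie in the closed set $\ms{X}^J$). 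Hence $C_p^+\cap\ms{O}^I=(C_p^+\cap\ms{X}^I)\setminus\bigcup_{K\subseteq J\subsetneq I}(C_p^+\cap\ms{X}^J)$ is nonempty iff each of these closed subsets is proper, iff $d_{I\setminus\{i\}}<d_I$ for every $i\in I\setminus K$, which is exactly $I\setminus K\subseteq R_K(w)$. With that substitution, your dimension count $d_I=\dim C_p^+-|I^{\text{c}}\cap R_K(w)|$ for the second assertion goes through unchanged.
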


\begin{Theorem}\label{T:special Poincare}
Let $I \subset [n-1]$ be a special subset.  Then
$$
P_{\mathscr{X}^I}(q) = \sum_{K \subset I} \sum_{w \in W^K}  q^{\ell(w) + |K| + s_{K,I}(w)},
$$
where $s_{K,I}(w) = | \{i \in I - K : w(\alpha_i + w_{0,K}(\alpha_i)) < 0 \} |.$
\end{Theorem}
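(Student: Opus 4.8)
The plan is to realize $P_{\mathscr{X}^I}(q)$ as a sum of cell dimensions via the Bialynicki--Birula decomposition (Theorem \ref{T:BB}), to read off each cell dimension from the two cited results (Propositions \ref{P:Strickland} and \ref{P:DS}), and then to finish with a short combinatorial bookkeeping. First I would fix the one-parameter subgroup $T' \subset T$ realizing the positivity convention used throughout Section \ref{S:KM} (namely $v > 0$ iff the first non-zero $\varepsilon$-coefficient of $v$ is positive), which is the same $T'$ underlying Propositions \ref{P:Strickland} and \ref{P:DS}; concretely one may take the image of the diagonal torus under the principal $\mt{SL}_2$ of Proposition \ref{T:AC formulation}, whose weights $n-1, n-3, \dots, -(n-1)$ are distinct, so that $T'$ is regular. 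Because $I$ is special, Theorem \ref{T:special = regular} makes $\mathscr{X}^I$ into an $\mt{SL}_2$-regular variety; in particular its $T'$-fixed points are finite and $(\mathscr{X}^I)^{T'} = (\mathscr{X}^I)^{T}$, so Theorem \ref{T:BB} applies and yields $P_{\mathscr{X}^I}(q) = \sum_{p} q^{\dim C_p^+(\mathscr{X}^I)}$, the sum running over the $T$-fixed points $p$ of $\mathscr{X}^I$, with $C_p^+(\mathscr{X}^I)$ the plus cell of $p$ inside $\mathscr{X}^I$.

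Next I would enumerate those fixed points. By the orbit decomposition \eqref{E:orbit union}, $\mathscr{X}^I = \bigsqcup_{K \subset I} \ms{O}^K$, and the $T$-fixed points of each $\ms{O}^K$ are indexed by the minimal coset representatives $w \in W^K$ (Strickland, as recalled just before Proposition \ref{P:Strickland}). Thus the $T$-fixed points of $\mathscr{X}^I$ are indexed by pairs $(K,w)$ with $K \subset I$ and $w \in W^K$, which already matches the index set of the asserted double sum. It then remains to identify $\dim C_p^+(\mathscr{X}^I)$ with $\ell(w) + |K| + s_{K,I}(w)$ for the fixed point $p$ attached to $(K,w)$.

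For this I would invoke Proposition \ref{P:DS}: since $K \subset I$, the plus cell of $p$ inside $\mathscr{X}^I$ is exactly $\mathscr{X}^I \cap C_p^+$, where $C_p^+$ is the plus cell of $p$ in the full variety $\ms{X}$, and its dimension equals $\dim C_p^+ - |I^{\text{c}} \cap R_K(w)|$. Substituting $\dim C_p^+ = \ell(w) + |K| + |R_K(w)|$ from Proposition \ref{P:Strickland} gives $\dim C_p^+(\mathscr{X}^I) = \ell(w) + |K| + |R_K(w)| - |I^{\text{c}} \cap R_K(w)|$. Now $R_K(w) \subseteq K^{\text{c}}$, so $R_K(w)$ is the disjoint union of $R_K(w) \cap I$ and $R_K(w) \cap I^{\text{c}}$, whence $|R_K(w)| - |I^{\text{c}} \cap R_K(w)| = |R_K(w) \cap I|$. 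Since $K \subset I$ we have $I \cap K^{\text{c}} = I - K$, so $R_K(w) \cap I = \{ i \in I - K : w(\alpha_i + w_{0,K}(\alpha_i)) < 0 \}$, whose cardinality is precisely $s_{K,I}(w)$. Feeding this into the Bialynicki--Birula sum produces the claimed formula.

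The geometric content is supplied entirely by the cited Propositions \ref{P:Strickland} and \ref{P:DS}, so the genuine points to watch are structural rather than computational: confirming that the $T'$ generating the Bialynicki--Birula cells is the one (or at least gives the same signs as the one) underlying Strickland's and De Concini--Springer's positivity convention, so that ``the plus cell of $p$ in $\mathscr{X}^I$'' denotes the same object in Theorem \ref{T:BB} and in Proposition \ref{P:DS}; and verifying that $\mt{SL}_2$-regularity of $\mathscr{X}^I$ (equivalently $(\mathscr{X}^I)^{T'} = (\mathscr{X}^I)^{T}$ finite) is what licenses Theorem \ref{T:BB}. I expect the main obstacle, to the extent there is one, to lie exactly in pinning down this alignment of $T'$ conventions; once it is fixed, the remainder is the elementary set-theoretic simplification above.
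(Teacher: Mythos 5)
Your proposal is correct and follows exactly the route the paper intends: the theorem is presented as a direct consequence of the Bialynicki--Birula decomposition together with Propositions \ref{P:Strickland} and \ref{P:DS}, with the fixed points of $\mathscr{X}^I$ enumerated by pairs $(K,w)$, $K\subset I$, $w\in W^K$, and the exponent simplified via $|R_K(w)| - |I^{\text{c}}\cap R_K(w)| = |R_K(w)\cap I| = s_{K,I}(w)$. Your bookkeeping and your caveat about aligning the $T'$ conventions are both sound; nothing further is needed.
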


\begin{Corollary}\label{C:KM identity}
Let $I$ be a special subset of $[n-1]$. Then 
\begin{align}\label{E:KM}
\sum_{K \subset I} \sum_{ w\in W^K}  q^{\ell(w) + |K| + s_{K,I}(w)} =
\left( \frac{1-q^3}{1-q^2} \right)^{|I|} \prod_{k=1}^n \frac{1-q^{k}}{1-q}.
\end{align}
\end{Corollary}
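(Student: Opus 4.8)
The plan is to recognize that (\ref{E:KM}) is nothing more than the equality of two independently derived expressions for a single invariant, the $q$-Poincar\'e polynomial $P_{\mathscr{X}^I}(q)$. Since $I$ is special, Theorem \ref{T:special = regular} guarantees that $\mathscr{X}^I$ is $\mt{SL}_2$-regular, so both of the computational frameworks developed in the excerpt are available and compute the same polynomial.

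First I would invoke Proposition \ref{T:AC formulation}. This supplies the right-hand side of (\ref{E:KM}): fixing the regular $\mt{SL}_2$-action specified there and applying the Aky\i ld\i z--Carrell formula of Theorem \ref{T:AC} to the unique $U'$-fixed point (the standard flag in $\mt{SL}_n/B \subset \mathscr{X}^I$), the $T'$-weight analysis of the tangent space $T_{\mathcal{P}}(\mathscr{X}^I)$ yields exactly $\left( \frac{1-q^3}{1-q^2} \right)^{|I|} \prod_{k=1}^n \frac{1-q^{k}}{1-q}$. Second, I would invoke Theorem \ref{T:special Poincare}, which supplies the left-hand side by a Bia\l ynicki-Birula cell count: Strickland's dimension formula for the plus cells of $\mathscr{X}$ (Proposition \ref{P:Strickland}) together with the De Concini--Springer reduction to the sub-embedding $\mathscr{X}^I$ (Proposition \ref{P:DS}) gives the cell dimensions, and summing $q^{\dim C_p^+}$ over the $T'$-fixed points, which are indexed by the pairs $(K,w)$ with $K \subset I$ and $w \in W^K$, produces $\sum_{K \subset I} \sum_{w \in W^K} q^{\ell(w) + |K| + s_{K,I}(w)}$, where the exponent records the contribution $\ell(w)+|K|$ from Proposition \ref{P:Strickland} corrected by the $I$-restricted count $s_{K,I}(w)$ coming from Proposition \ref{P:DS}.

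Equating these two expressions for $P_{\mathscr{X}^I}(q)$ gives (\ref{E:KM}) at once. In this sense there is no genuine obstacle at the level of the corollary itself: the entire substance lies in the two prior results, and the corollary is their juxtaposition. The only point demanding care is bookkeeping consistency, namely that the total degree $\ell(w)+|K|+s_{K,I}(w)$ appearing on the ``sum'' side really equals $\dim C_p^+$ in $\mathscr{X}^I$ orbit by orbit; but this agreement is precisely the assertion of Theorem \ref{T:special Poincare}, so no additional verification is needed here. The upshot is a ``sum equals product'' identity generalizing the Kostant--Macdonald identity (\ref{A:kostantmacdonald}), to which (\ref{E:KM}) specializes when $I = \emptyset$.
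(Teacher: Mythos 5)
Your proposal is correct and matches the paper's intended argument exactly: the corollary follows by equating the product formula for $P_{\mathscr{X}^I}(q)$ from Proposition \ref{T:AC formulation} with the cell-count expression from Theorem \ref{T:special Poincare}, both valid because $I$ special makes $\mathscr{X}^I$ an $\mt{SL}_2$-regular variety with a Bia{\l}ynicki-Birula decomposition. No further verification is needed beyond what those two results already assert.
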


\begin{Example}\label{E:KM n=3}
We illustrate Corollary \ref{C:KM identity} in the case $n = 3$.
If $I = \emptyset$, then we recover the classical Kostant-Macdonald identity for $\mt{SL}_3 / B$ (c.f. \cite{AC89}):
$$
1 + 2q + 2q^2 + q^3 = \frac{(1-q^2)(1-q^3)}{(1-q)^2} = (1+q)(1+q+q^2).
$$
If $I = \{1\}$, we obtain a new identity:
$$
(1 + q^2)(1 + q + q^2) + q(1 + q + q^2) = \frac{(1-q^3)}{(1-q^2)} \cdot \frac{(1-q^2)(1-q^3)}{(1-q)^2} = (1 + q + q^2)^2.
$$
The decomposition of the left-hand side reflects the sums over individual subsets $K \subset I$.
The identity for $I = \{2\}$ yields the same identity as $I = \{1\}$.
\end{Example}

\begin{Remark}
If $I$ is any special subset of $[n-1]$ of cardinality $l$ and $K \subset I$ has cardinality $k$, then one can show directly that
$$
\sum_{w \in W^K} q^{\ell(w) + |K| + s_{K,I}(w)} =
\left(\frac{q}{1+q^2}\right)^k \left(\frac{1+q^2}{1+q}\right)^{l} \prod_{i=1}^n \left(1+q+\dots+q^{i-1}\right)
$$
by verifying $s_{K,I}(w) = | \{ i \in I \setminus K : \ell(w s_i) < \ell(w) \} |$ (c.f. \cite[proof of Proposition 2.6]{Strickland86}).
Then (\ref{E:KM}) is obtained by summing over all $K \subset I$ and applying the Binomial Theorem.
\end{Remark}

\bibliography{Regular}
\bibliographystyle{plain}

\end{document}